\begin{document}

\renewcommand{\theenumi}{\rm (\roman{enumi})}
\renewcommand{\labelenumi}{\rm \theenumi}

\newtheorem{thm}{Theorem}[section]
\newtheorem{defi}[thm]{Definition}
\newtheorem{lem}[thm]{Lemma}
\newtheorem{prop}[thm]{Proposition}
\newtheorem{cor}[thm]{Corollary}
\newtheorem{exam}[thm]{Example}
\newtheorem{conj}[thm]{Conjecture}
\newtheorem{rem}[thm]{Remark}
\allowdisplaybreaks

\title{An improvement of the integrability of the state space of the $\Phi^4_3$-process and the support of the $\Phi ^4_3$-measure constructed by the limit of stationary processes of approximating stochastic quantization equations}

\author{Seiichiro Kusuoka
\vspace{5mm}\\
\normalsize Department of Mathematics, Graduate School of Science, Kyoto University,\\
\normalsize Kitashirakawa Oiwakecho, Sakyo-ku, Kyoto 606-8502, Japan\\
\normalsize e-mail address: {\tt{kusuoka@math.kyoto-u.ac.jp}}}
\maketitle

\begin{abstract}
This is a remark paper for the $\Phi ^4_3$-measure and the associated flow on the torus which are constructed in \cite{AlKu} by the limit of the stationary processes of the stochastic quantization equations of approximation measures.
We improve the integrability of the state space of the $\Phi ^4_3$-process and the support of the $\Phi ^4_3$-measure.
For the improvement, we improve the estimates of the H\"older continuity in time of the solutions to approximation equations.
In the present paper, we only discuss the estimates different from those in \cite{AlKu}.
\end{abstract}

{\bf AMS Classification Numbers:}
60H17, 81S20, 81T08, 60H15, 35Q40, 35R60, 35K58

 \vskip0.2cm

\renewcommand{\labelenumi}{\rm{(\roman{enumi}})}

\section{Introduction}\label{sec:intro}

Recently by the new theories such as regularity structure \cite{Ha} and paracontrolled calculus \cite{GIP}, singular nonlinear stochastic partial differential equations became solvable via renormalization.
In particular, the singular stochastic partial differential equations associated to stochastic quantization of the $\Phi ^4_3$-measure are solved (see \cite{AlKu}, \cite{CaCh}, \cite{GuHo1}, \cite{GuHo2}, \cite{Ho}, \cite{HIN}, \cite{MoWe}, \cite{MW3}, \cite{ZhZh1} and \cite{ZhZh2}).
Moreover, the $\Phi ^4_3$-measure is able to be constructed from the stochastic quantization equations (see \cite{AlKu}, \cite{BaGu1}, \cite{BaGu2}, \cite{GuHo2} and \cite{MW3}).
For the detail of the history and background of the $\Phi ^4_3$-measure and stochastic quantization, see the introduction of \cite{AlKu}.

In \cite{AlKu}, we considered the probability measures which approximate the $\Phi ^4_3$-measure, and the stochastic quantization equations associated to them, and provided the stationary solutions to the approximating stochastic quantization equations. By proving the tightness of the stationary solutions we obtain the $\Phi ^4_3$-process as a limit.
Moreover, we constructed the $\Phi ^4_3$-measure as a limit of the marginal distributions. Here, note that the approximation sequence of the marginal distributions is an approximation of the (formally defined) $\Phi ^4_3$-measure.
The most remarkable advantage of considering the stationary solutions is that we are able to construct the time-global limit process and the $\Phi ^4_3$-measure directly. This is a difference between \cite{AlKu} and the earlier result \cite{MW3}.
We remark that there is another delicate difference between \cite{AlKu} and \cite{MW3}.
In \cite{AlKu}, we first prepare the probability measures $\{ \mu _N\}$ approximating the (formally defined) $\Phi ^4_3$-measure, and consider the stochastic partial differential equations associated to the stochastic quantization of $\{ \mu _N\}$.
On the other hand, in \cite{MW3}, they first consider the stochastic quantization equation associated to the  (formally defined) $\Phi ^4_3$-measure and show the existence of the global solution to the stochastic quantization equation by approximation.
So, between the arguments of \cite{AlKu} and \cite{MW3} there is a difference on the order of the two operations: approximation and stochastic quantization.
This makes a delicate difference in the concerned stochastic partial differential equations.
Indeed, approximation operators appear in the stochastic quantization equation in the case of \cite{AlKu} (see Eq. (4.1) in \cite{AlKu}).
Because of the difference, we only have an energy functional with square and fourth-power integrals in \cite{AlKu}, while the $p$th-power integrability of energy functionals is obtained for all $p\in [1,\infty )$ in \cite{MW3}.
Hence, we have some restriction on the integrability of the function spaces in the argument of \cite{AlKu}.

In the present paper, we improve the integrability of the state space of the $\Phi ^4_3$-process and the support of the $\Phi ^4_3$-measure obtained by \cite{AlKu}.
We will show the tightness of the approximating processes in smaller Besov spaces by improving the estimates of the H\"older continuity in time (see Proposition \ref{prop:holder+}) and the estimate uniform in time (see Proposition \ref{prop:estsup+}). 
They enable us to improve the main estimate in \cite{AlKu} (see Theorem \ref{thm:tight1}) and by using the estimate and the Besov embedding theorem we obtain the better integrability of the state space $B_{12/5}^{-1/2-\varepsilon}$ for the limit process and the support $B_\infty^{-1/2-\varepsilon}$ of our $\Phi ^4_3$-measure (see Theorem \ref{thm:tight2}).
We remark that in the setting of \cite{MW3}, which is different from our setting as mentioned above, much more integrability for the state space of the $\Phi ^4_3$-process is obtained.
On the other hand, the supports of  the $\Phi ^4_3$-measures obtained here and obtained in \cite{MW3} are the same.

We also remark that the state space of the $\Phi ^4_3$-process and the support of the $\Phi ^4_3$-measure obtained in the present paper are different.
Note that null sets of the $\Phi ^4_3$-measure can be ignored in the support of the measure, but cannot in the state space of the $\Phi ^4_3$-process.
Only polar sets can be ignored in the state space of the $\Phi ^4_3$-process.
Moreover, generally polar sets of processes are smaller than null sets of the invariant measures.
Hence, such a difference naturally appears in the main theorem (see Theorem \ref{thm:tight2}).


The organization of the present paper is as follows.
In Section \ref{sec:preparation} we recall the notation and setting of \cite{AlKu}.
In Section \ref{sec:improve} we consider the improvement of the integrability.
To do it, we give some estimates better than those in \cite{AlKu}.
We only discuss the different parts of the argument in \cite{AlKu} and show the main theorem (Theorem \ref{thm:tight2}).

\section{Preparation}\label{sec:preparation}

In this section we recall the notation and setting of \cite{AlKu}.
Let $\Lambda$ be the three-dimensional torus given by $({\mathbb R}/(2\pi {\mathbb Z}))^3$.
Let $L^p$ and $W^{s,p}$ be the $p$th-order integrable function space and the Sobolev space respectively, with respect to the Lebesgue measure on $\Lambda$, for $s\in {\mathbb R}$ and $p\in [1,\infty ]$.
Denote by $\langle \cdot ,\cdot \rangle$ the inner product on $L^2(\Lambda; {\mathbb C})$.
Let $\{ e_k; k\in {\mathbb Z}^3\}$ be the Fourier basis on $L^2(\Lambda; {\mathbb C})$ and $k^2 := \sum _{j=1}^3 k_j^2$ for $k = (k_1,k_2,k_3) \in {\mathbb Z}^3$.

To define approximation operators on ${\mathcal D}'(\Lambda)$ (the space of distributions on $\Lambda$), let $\psi ^{(1)}$ be a nonincreasing $C^\infty $-function on $[0,\infty)$ such that $\psi ^{(1)}(r) =1$ for $r\in [0,1]$ and $\psi ^{(1)}(r)=0$ for $r\in [2,\infty )$, and let $\psi ^{(2)}$ be a nonincreasing function on $[0,\infty)$ such that $\psi ^{(2)}(r) =1$ for $r\in [0,2]$ and $\psi ^{(2)}(r)=0$ for $r\in [4,\infty )$.
We remark that $\psi ^{(2)}$ is not necessary continuous.
For $N\in {\mathbb N}$, $i=1,2$ and $k = (k_1,k_2,k_3) \in {\mathbb Z}^3$, denote $\psi ^{(i)}(2^{-N} |k_1|)\psi ^{(i)}(2^{-N} |k_2|)\psi ^{(i)}(2^{-N} |k_3|)$ by $\psi _N^{(i), \otimes 3}(k)$, and define $P_N^{(i)}$ by the mapping from ${\mathcal D}'(\Lambda)$ to $C^\infty (\Lambda )$  given by
\[
P_N ^{(i)} f := \sum _{k\in {\mathbb Z}^3} \psi _N^{(i), \otimes 3}(k) \langle f , e_k \rangle e_k .
\]

Let $\mu _0$ be the centered Gaussian measure on ${\cal D}'(\Lambda )$ with the covariance operator $[2(-\triangle + m_0^2)]^{-1}$ where $\triangle$ is the Laplacian on $\Lambda$ and $m_0>0$, and let
\begin{align*}
C_1^{(N)} &:= \frac{1}{2(2\pi )^3} \sum _{k\in {\mathbb Z}^3} \frac{\left( \psi _N^{(1), \otimes 3}(k) \right)^2}{k^2+m_0^2} \\
C_2^{(N)} &:= \frac{1}{2(2\pi )^6} \sum _{l_1,l_2 \in {\mathbb Z}^3} \frac{\left( \psi _N^{(1), \otimes 3}(l_1) \right) ^2 \left( \psi _N^{(1), \otimes 3}(l_2) \right) ^2 \left( \psi _N^{(1), \otimes 3}(l_1+l_2) \right) ^2}{(l_1^2+m_0^2)(l_2^2+m_0^2)(l_1^2 + l_2^2 + (l_1+l_2)^2 +3m_0^2)} .
\end{align*}
The constants $C_1^{(N)}$ and $C_2^{(N)}$ are renormalization constants, and satisfy $\lim _{N\rightarrow \infty} C_1^{(N)} = \lim _{N\rightarrow \infty} C_2^{(N)} =\infty$.
Let $\lambda _0 \in (0,\infty )$ and $\lambda \in (0,\lambda _0]$ be fixed.
Define a function $U_N$ on ${\cal D}'(\Lambda)$ by
\[
U_N(\phi ) = \int _{\Lambda} \left\{ \frac{\lambda}{4} (P_N^{(1)} \phi ) (x)^4 - \frac{3\lambda }{2}\left( C_1^{(N)} -3\lambda C_2^{(N)}\right) (P_N ^{(1)} \phi )(x)^2 \right\} dx ,
\]
and consider the probability measure $\mu _N$ on ${\cal D}'(\Lambda)$ given by
\[
\mu _N (d\phi ) = A_N ^{-1} \exp \left( -U_N(\phi ) \right) \mu _0 (d\phi)
\]
where $A_N$ is the normalizing constant.
We remark that $\{ \mu _N\}$ is an approximation sequence for the $\Phi ^4_3$-measure which will be constructed below as a stationary probability measure of the flow associated with the stochastic quantization equation.

Letting $\dot W_t(x)$ be a Gaussian white noise with parameter $(t,x)\in  (-\infty ,\infty) \times \Lambda$, we consider the stochastic partial differential equation on $\Lambda$
\begin{equation}\label{eq:SDEN3}\left\{ \begin{array}{rl}
\displaystyle \partial _t \tilde X_t^{N}(x)
&\displaystyle = \dot W_t(x) - (-\triangle +m_0^2) \tilde X_t^N(x) \\[3mm]
&\displaystyle \quad - \lambda P_N ^{(1)} \left\{ (P_N ^{(1)} \tilde X_t^N)^3 (x) -3 \left( C_1^{(N)} -3\lambda C_2^{(N)}\right) P_N ^{(1)} \tilde X_t^N(x) \right\} \\
\displaystyle \tilde X_0^{N}(x)
&\displaystyle = \xi _N (x)
\end{array}\right. \end{equation}
where $\xi _N$ is an initial value which has $\mu _N$ as its law and is independent of $\dot W_t$.
Then, $\tilde X^{N}$ is a stationary process (see Theorem 4.1 of \cite{AlKu}).
Supplementary we prepare $Z_t$ defined by the solution to the stochastic partial differential equation on  $\Lambda$:
\begin{equation}\label{eq:SDEZ}\left\{ \begin{array}{rll}
\partial _t Z_t(x) &= \dot W_t(x) - (-\triangle +m_0^2)Z_t(x) , & (t,x)\in (-\infty ,\infty) \times \Lambda \\
Z_0(x)& =\zeta (x), &x\in \Lambda
\end{array}\right.\end{equation}
where $\zeta$ is a random variable which has $\mu _0$ as its law and is independent of $\dot W_t$.
We choose a pair of the initial values $(\xi _N ,\zeta)$ so that the paired process $(\tilde X^{N}, Z)$ is a stationary process.
For the existence of such a pair, see Section 4 of \cite{AlKu}.

Next we prepare notation of Besov spaces and paraproducts.
Let $\chi$ and $\varphi$ be functions in $C^\infty ([0,\infty );[0,1])$ such that the supports of $\chi$ and $\varphi$ are included by $[0,4/3)$ and $[3/4, 8/3]$ respectively, and that
\[
\chi (r ) + \sum _{j=0}^\infty \varphi (2^{-j}r ) =1, \quad r\in [0,\infty ).
\]
Then, it is easy to see that
\begin{align*}
&\varphi (2^{-j}r )  \varphi (2^{-k}r ) =0, \quad r \in [0,\infty ),\ j,k \in {\mathbb N}\cup \{ 0\} \ \mbox{such that}\ |j-k| \geq 2,\\
&\chi (r ) \varphi (2^{-j}r ) =0, \quad r \in [0,\infty),\ j \in {\mathbb N}.
\end{align*}
Let ${\mathcal S}({\mathbb R}^3)$ and ${\mathcal S}'({\mathbb R}^3)$ be the Schwartz space and the space of tempered distributions on ${\mathbb R}^3$, respectively.
For $f\in {\mathcal D}'(\Lambda )$, we can define the periodic extension $\widetilde{f} \in {\mathcal S}'({\mathbb R}^3)$.
By this extension, we define the (Littlewood-Paley) nonhomogeneous dyadic blocks $\{ \Delta _j; j\in {\mathbb N} \cup \{ -1, 0\}\}$ by setting
\[
\begin{array}{lll}
\Delta _{-1} f (x)&= \left[ {\mathcal F}^{-1} \left( \chi (|\cdot |) {\mathcal F} \widetilde{f} \right) \right] (x), \quad & x\in \Lambda \\
\Delta _j f (x)&= \left[ {\mathcal F}^{-1} \left( \varphi (2^{-j} |\cdot |){\mathcal F} \widetilde{f} \right) \right] (x), \quad & x\in \Lambda, \ j \in {\mathbb N}\cup \{ 0\} ,
\end{array}
\]
where ${\mathcal F}$ and ${\mathcal F}^{-1}$ are the Fourier transform and inverse Fourier transform operators on ${\mathbb R}^3$.
We remark that 
\[
\Delta _{-1} f = \sum _{k\in {\mathbb Z}^3} \chi (|k|) \langle f, e_k \rangle e_k , \quad
\Delta _j f = \sum _{k\in {\mathbb Z}^3} \varphi (2^{-j} |k|) \langle f, e_k \rangle e_k
\]
hold for $f\in {\mathcal D}'(\Lambda )$ and $j \in {\mathbb N}\cup \{ 0\}$.
We define the Besov norm $\| \cdot \| _{B_{p,r}^s}$ and the Besov space $B_{p,r}^s$ on $\Lambda$ with $s \in {\mathbb R}$ and $p,r \in [1,\infty]$ by
\begin{align*}
\| f \| _{B_{p,r}^s} &:= \left\{ \begin{array}{ll}
\displaystyle \left( \sum _{j=-1}^\infty 2^{jsr} \| \Delta _j f \| _{L^p}^r \right) ^{1/r} , & r\in [1,\infty) ,\\
\displaystyle \sup _{j \in {\mathbb N}\cup \{ -1,0\}} 2^{js} \| \Delta _j f \| _{L^p} , & r= \infty ,
\end{array} \right. \\
B_{p,r}^s& := \{ f\in {\mathcal D}'(\Lambda ); \| f \| _{B_{p,r}^s} <\infty \} .
\end{align*}
For simplicity of notation, we denote $B_{p,\infty }^s$ by $B_p^s$ for $s \in {\mathbb R}$ and $p \in [1,\infty]$.
Let
\[
S_j f := \sum _{k =-1}^{j-1} \Delta _{k} f, \quad j\in {\mathbb N}\cup \{ 0\} .
\]
For simplicity of notation, let $\Delta _{-2}f :=0$ and $S_{-1}f :=0$.
We define
\begin{align*}
f \mbox{\textcircled{\scriptsize$<$}} g &:= \sum _{j=0}^\infty (S_j f) \Delta _{j+1} g ,\quad f \mbox{\textcircled{\scriptsize$>$}} g := g \mbox{\textcircled{\scriptsize$<$}} f, \\
f \mbox{\textcircled{\scriptsize$=$}} g &:= \sum _{j=-1}^\infty \Delta _{j} f  \left( \Delta _{j-1} g + \Delta _{j} g + \Delta _{j+1} g\right) .
\end{align*}
By the definitions of $\{ \Delta _j\}$, $\{ S_j\}$, $\mbox{\textcircled{\scriptsize$<$}}$, $\mbox{\textcircled{\scriptsize$=$}}$, and $\mbox{\textcircled{\scriptsize$>$}}$, we have
\[
fg = f \mbox{\textcircled{\scriptsize$<$}} g + f \mbox{\textcircled{\scriptsize$=$}} g + f \mbox{\textcircled{\scriptsize$>$}} g .
\]
Let $f \mbox{\textcircled{\scriptsize$\leqslant$}} g := f \mbox{\textcircled{\scriptsize$<$}} g + f \mbox{\textcircled{\scriptsize$=$}} g$ and $f \mbox{\textcircled{\scriptsize$\geqslant$}} g := f \mbox{\textcircled{\scriptsize$>$}} g + f \mbox{\textcircled{\scriptsize$=$}} g$.
For the properties of Besov spaces and paraproducts, see Section 2 in \cite{AlKu} or \cite{BCD}.
We also remark that $P_N^{(1)}$ is a bounded operator on $B_p^s$ for $p\in (1,\infty )$ and $s\in {\mathbb R}$, and moreover, sufficiently good for commutator estimates with paraproducts (see Section 2 of \cite{AlKu}). 

Now we prepare notation of the polynomials of Ornstein-Uhlenbeck processes as follows.
\begin{align*}
{\mathcal Z}^{(1,N)}_t &:= P_N ^{(1)} Z_t ,\\
{\mathcal Z}^{(2,N)}_t &:= (P_N ^{(1)} Z_t) ^2 - C_1^{(N)} , \\
{\mathcal Z}^{(3,N)}_t &:= (P_N ^{(1)} Z_t) ^3 - 3 C_1^{(N)} P_N ^{(1)} Z_t , \\
{\mathcal Z}^{(0,2,N)}_t &:= \int _{-\infty}^t e^{(t-s)(\triangle -m_0^2)} P_N ^{(1)} {\mathcal Z}^{(2,N)}_s ds , \\
{\mathcal Z}^{(0,3,N)}_t &:= \int _{-\infty}^t e^{(t-s)(\triangle -m_0^2)} P_N ^{(1)} {\mathcal Z}^{(3,N)}_s ds , \\
{\mathcal Z}^{(2,2,N)}_t &:= {\mathcal Z}^{(2,N)}_t \mbox{\textcircled{\scriptsize$=$}}P_N ^{(1)} {\mathcal Z}^{(0,2,N)}_t -C_2^{(N)}, \\
{\mathcal Z}^{(2,3,N)}_t &:= {\mathcal Z}^{(2,N)}_t \mbox{\textcircled{\scriptsize$=$}}P_N ^{(1)} {\mathcal Z}^{(0,3,N)}_t - 3C_2^{(N)}{\mathcal Z}^{(1,N)}_t ,
\end{align*}
for $t\in (-\infty , \infty)$ and $N\in {\mathbb N}$.
Denote $P_N^{(2)}\tilde{X}^N$ by $X^{N}$. 
To show the tightness of the laws of  $\{ X^{N}\}$, by using these notations we transform (\ref{eq:SDEN3}) for a better equation.
In the present paper, we omit the detail of the transformation and just write the result of the transformation.
Consider the following:
\begin{align*}
X^{N,(2)}_t &:=  P_N^{(2)} \left( \tilde X_t^{N} -Z_t \right) + \lambda {\mathcal Z}^{(0,3,N)}_t \\
X^{N,(2),<}_t &:= -3\lambda \int _0^t e^{(t-s)(\triangle -m_0^2)} P_N^{(1)} \left[ \left( P_N^{(1)} X^{N,(2)}_s - \lambda P_N^{(1)} {\mathcal Z}^{(0,3,N)}_s \right) \mbox{\textcircled{\scriptsize$<$}} {\mathcal Z}^{(2,N)}_s \right] ds \\
X^{N,(2),\geqslant}_t & := X^{N,(2)}_t - X^{N,(2),<}_t .
\end{align*}
Note that $(X^{N,(2),<}_0, X^{N,(2),\geqslant}_0) = (0,X^{N,(2)}_0)= (0, P_N^{(2)} \left( \xi _N - \zeta \right) + \lambda {\mathcal Z}^{(0,3,N)}_0)$.
Let
\begin{align*}
\Psi _t^{(1)} (w) &:= \int _0^t e^{(t-s)(\triangle -m_0^2)} (P_N^{(1)} )^2  \left[ \left( w_s - \lambda P_N^{(1)} {\mathcal Z}^{(0,3,N)}_s \right) \mbox{\textcircled{\scriptsize$<$}} {\mathcal Z}^{(2,N)}_s \right] ds\\
&\qquad - \left( w_t - \lambda P_N^{(1)} {\mathcal Z}^{(0,3,N)}_t \right) \mbox{\textcircled{\scriptsize$<$}} \int _0^t e^{(t-s)(\triangle -m_0^2)} (P_N^{(1)} )^2 {\mathcal Z}^{(2,N)}_s ds , \\
\Psi _t^{(2)} (w) &:= \left[ \left( w_t - \lambda P_N^{(1)} {\mathcal Z}^{(0,3,N)}_t \right) \mbox{\textcircled{\scriptsize$<$}} \int _0^t e^{(t-s)(\triangle -m_0^2)} (P_N^{(1)} )^2 {\mathcal Z}^{(2,N)}_s ds \right] \mbox{\textcircled{\scriptsize$=$}} {\mathcal Z}^{(2,N)}_t\\
&\qquad - \left( w_t - \lambda P_N^{(1)} {\mathcal Z}^{(0,3,N)}_t \right) \left[ \int _0^t e^{(t-s)(\triangle -m_0^2)} (P_N^{(1)} )^2 {\mathcal Z}^{(2,N)}_s ds \mbox{\textcircled{\scriptsize$=$}} {\mathcal Z}^{(2,N)}_t \right] ,\\
\Phi _t^{(1)}(w)
&:= - 3 \left( {\mathcal Z}_t^{(1,N)} - \lambda P_N^{(1)} {\mathcal Z}^{(0,3,N)}_t\right) \mbox{\textcircled{\scriptsize$\leqslant$}} w_t^2 \\
&\qquad + 3 \lambda \left[ \left( 2{\mathcal Z}_t^{(1,N)} - \lambda P_N^{(1)} {\mathcal Z}^{(0,3,N)}_t \right) P_N^{(1)} {\mathcal Z}^{(0,3,N)}_t \right] \mbox{\textcircled{\scriptsize$\leqslant$}} w_t ,\\
\Phi _t ^{(2)} (w)
&:= - 3 \left( w_t - \lambda P_N^{(1)} {\mathcal Z}^{(0,3,N)}_t \right) \mbox{\textcircled{\scriptsize$>$}} {\mathcal Z}^{(2,N)}_t + 3 \lambda {\mathcal Z}^{(2,3,N)}_t \\
&\quad + 9\lambda \left( w_t - \lambda P_N^{(1)} {\mathcal Z}^{(0,3,N)}_t \right) \\
&\quad \hspace{2cm} \times \left( {\mathcal Z}^{(2,2,N)}_t - {\mathcal Z}^{(2,N)}_t\mbox{\textcircled{\scriptsize$=$}}\int _{-\infty }^0 e^{(t-s)(\triangle -m_0^2)} \left( P_N^{(1)}\right) ^2 {\mathcal Z}^{(2,N)}_s ds\right) \\
&\quad - \lambda ^2 \left( 3{\mathcal Z}_t^{(1,N)} - \lambda P_N^{(1)} {\mathcal Z}^{(0,3,N)}_t \right) \left( P_N^{(1)} {\mathcal Z}^{(0,3,N)}_t \right) ^2, \\
\Phi _t ^{(3)} (w)
&:= -3 \left( {\mathcal Z}_t^{(1,N)} - \lambda P_N^{(1)} {\mathcal Z}^{(0,3,N)}_t\right) \mbox{\textcircled{\scriptsize$>$}} w_t^2 \\
&\qquad + 3 \lambda \left[ \left( 2{\mathcal Z}_t^{(1,N)} - \lambda P_N^{(1)} {\mathcal Z}^{(0,3,N)}_t \right) P_N^{(1)} {\mathcal Z}^{(0,3,N)}_t \right] \mbox{\textcircled{\scriptsize$>$}} w_t .
\end{align*}
Then, in view of the argument in Section 4 of \cite{AlKu}, the pair $(X^{N,(2),<}_t, X^{N,(2),\geqslant}_t)$ satisfies the coupled partial differential equation:
\begin{equation} \label{PDEpara2}\left\{ \begin{array}{l}
\displaystyle (\partial _t - \triangle + m_0^2) X^{N,(2),<}_t \\[2mm]
\displaystyle = -3\lambda P_N^{(1)}  \left[ \left( P_N^{(1)} X^{N,(2),<}_t + P_N^{(1)} X^{N,(2),\geqslant}_t - \lambda P_N^{(1)} {\mathcal Z}^{(0,3,N)}_t \right) \mbox{\textcircled{\scriptsize$<$}} {\mathcal Z}^{(2,N)}_t \right] \\[4mm]
\displaystyle (\partial _t - \triangle + m_0^2) X^{N,(2),\geqslant}_t \\[1mm]
\displaystyle  = - \lambda P_N^{(1)} \left[ \left( P_N^{(1)} X^{N,(2),<}_t + P_N^{(1)} X^{N,(2),\geqslant}_t \right) ^3 \right] \\[2mm]
\displaystyle \quad + \lambda P_N^{(1)} \Phi _t^{(1)}(P_N^{(1)} X^{N,(2),<} + P_N^{(1)} X^{N,(2),\geqslant}) \\[2mm]
\displaystyle \quad + \lambda P_N^{(1)} \Phi _t^{(2)}(P_N^{(1)} X^{N,(2),<} + P_N^{(1)} X^{N,(2),\geqslant}) \\[2mm]
\displaystyle \quad + \lambda P_N^{(1)}  \Phi _t ^{(3)} (P_N^{(1)} X^{N,(2),<} + P_N^{(1)} X^{N,(2),\geqslant}) \\[2mm]
\displaystyle \quad - 3\lambda P_N^{(1)} \left[ ( P_N^{(1)} X^{N,(2),\geqslant}_t) \mbox{\textcircled{\scriptsize$=$}} {\mathcal Z}^{(2,N)}_t \right] \\[2mm]
\displaystyle \quad + 9\lambda ^2 P_N^{(1)} \left[ \Psi _t^{(1)} (P_N^{(1)} X^{N,(2),<} + P_N^{(1)} X^{N,(2),\geqslant} ) \mbox{\textcircled{\scriptsize$=$}} {\mathcal Z}^{(2,N)}_t \right] \\[2mm]
\displaystyle \quad + 9\lambda ^2 P_N^{(1)} \Psi _t^{(2)} (P_N^{(1)} X^{N,(2),<} + P_N^{(1)} X^{N,(2),\geqslant}) .
\end{array}\right. \end{equation}
By showing the tightness of the laws of $X^{N,(2)}_t = X^{N,(2),\geqslant}_t + X^{N,(2),<}_t$, we will obtain the tightness of the laws of $X_t^N := P_N^{(2)} \tilde X_t^N$.

\begin{rem}
Some typos in \cite{AlKu} are corrected in \eqref{PDEpara2}. Precisely, in \cite{AlKu}, $P_N^{(1)}$ of $P_N^{(1)} {\mathcal Z}^{(0,3,N)}_t$ is dropped in the equation corresponding to \eqref{PDEpara2} and also in the coefficients $\Psi ^{(i)}$ and $\Phi ^{(i)}$.  
\end{rem}

For estimates we prepare the following.
For $\eta \in [0,1)$, $\gamma \in (0,1/4)$ and $\varepsilon \in (0,1]$ define ${\mathfrak X}_{\lambda , \eta ,\gamma }^N (t)$ and  ${\mathfrak Y}_{\varepsilon}^N (t)$ by
\begin{align*}
{\mathfrak X}_{\lambda , \eta ,\gamma }^N (t) &:= \int _0^t \left( \left\| \nabla X_s^{N,(2),\geqslant}\right\| _{L^2}^2 + \left\| X_s^{N,(2)}\right\| _{L^2}^2 + \lambda \left\| P_N^{(1)} X_s^{N,(2)}\right\| _{L^4}^4 \right) ds \\
&\quad + \sup _{s',t' \in [0,t]; s'<t'} \frac{(s')^\eta \left\| X^{N,(2)}_{t'} - X^{N,(2)}_{s'} \right\| _{L^{4/3}}}{(t'-s')^{\gamma }} ,\\
{\mathfrak Y}_{\varepsilon}^N (t)&:= \int _0^t \left\| X^{N,(2),<}_s \right\| _{B_{4}^{1-\varepsilon }}^3 ds + \int _0^t \left\| X^{N,(2),\geqslant}_s \right\| _{B_{4/3}^{1+\varepsilon }} ds.
\end{align*}
To simplify the notation, we denote by $Q$ a positive polynomial built with the following quantities 
\begin{equation}\label{eq:Zs}\begin{array}{l}
\displaystyle \sup_{t\in [0,T]}\| {\mathcal Z}_t^{(1,N)}\| _{B_{\infty}^{-(1+\varepsilon )/2}}, \quad \sup_{t\in [0,T]}\| P_N^{(2)} Z_t \| _{B_{\infty}^{-(1+\varepsilon )/2}}, \quad \sup _{t\in [0,T]} \left\| {\mathcal Z}^{(2,N)}_t \right\| _{B_{\infty}^{-1-\varepsilon /24}},\\
\displaystyle \sup _{t\in [0,T]}  \left\| {\mathcal Z}^{(2,2,N)}_t \right\| _{B_{\infty }^{-\varepsilon /4}}, \quad \sup _{t\in [0,T]} \left\| {\mathcal Z}^{(0,2,N)}_t \right\| _{B_{\infty }^{1-\varepsilon /2}}, \quad \sup _{t\in [0,T]} \left\| {\mathcal Z}^{(0,3,N)}_t \right\| _{B_{\infty }^{1/2-\varepsilon /4}}, \\
\displaystyle \sup _{t\in [0,T]} \left\| {\mathcal Z}^{(2,3,N)}_t \right\| _{B_{\infty }^{-(1+\varepsilon )/2}}, \quad \sup _{t\in [0,T]} \left\| {\mathcal Z}_t^{(1,N)} \left( P_N^{(1)} {\mathcal Z}^{(0,3,N)}_t \right) \right\| _{B_{\infty }^{-(1+\varepsilon )/2}}, \\
\displaystyle \sup _{t\in [0,T]} \left\| {\mathcal Z}_t^{(1,N)} \left( P_N^{(1)} {\mathcal Z}^{(0,3,N)}_t \right) ^2\right\| _{B_{\infty }^{-(1+\varepsilon )/2}} \\
\displaystyle \hspace{5cm} \quad \mbox{and} \quad \sup _{s,t \in [0,T]; s<t} \frac{\left\| {\mathcal Z}^{(0,3,N)}_t - {\mathcal Z}^{(0,3,N)}_s \right\| _{L^\infty }}{(t-s)^{\gamma }},
\end{array}\end{equation}
with coefficients depending on $\lambda _0 $, $\varepsilon$, $\eta$, $\gamma$ and $T$, and we also denote by $C$ a positive constant depending on $\lambda _0$, $\varepsilon$, $\eta$, $\gamma$ and $T$.
We remark that $Q$ and $C$ can be different from line to line.
A constant depending on an extra parameter $\delta$ is denoted by $C_\delta$.
As in Section 3 of \cite{AlKu}, we have the square integrability of those in (\ref{eq:Zs}) with respect to the probability measure.
In view of this fact and hypercontractivity of Gaussian random variables, any polynomial consists of the elements in (\ref{eq:Zs}) are integrable with respect to the probability measure, i.e. $E[Q]\leq C$.

\section{Improvement of integrability}\label{sec:improve}

Let $\alpha \in [0,1/2)$ and choose $\varepsilon \in (0,1/16]$, $\gamma \in (0,1/8)$ and $\eta \in (1/2,1)$ such that $2\varepsilon < \gamma$, $\eta > \alpha + 2\gamma$ and $2\alpha + 4\gamma +\varepsilon < 1$.
In the present paper, we only see the difference from \cite{AlKu} and omit the argument of the parts which are the same as those in \cite{AlKu}.

We prepare some lemmas for estimates of the terms in \eqref{PDEpara2}, which are different versions of estimates in \cite{AlKu}.

\begin{lem}\label{lem:Psi1}
For $p\in [1,2]$, $\varepsilon \in (0,1/16)$, $s,t\in [0,T]$ and $\delta \in (0,1]$,
\begin{align*}
&\int _{s}^{t} (t-u)^{-\alpha /2 -\gamma }  \left\| \Psi _u^{(1)} (P_N^{(1)} X^{N,(2)}) \mbox{\rm\textcircled{\scriptsize$=$}} {\mathcal Z}^{(2,N)}_u \right\| _{B_{p}^{\varepsilon}} du\\
&\leq \delta \int _{s}^{t} \sup _{r\in [0,u)} \frac{r^\eta \left\| P_N^{(1)} X^{N,(2)}_u - P_N^{(1)} X^{N,(2)}_r \right\| _{L^{p}}}{(u-r)^{\gamma }} du \\
&\quad + \delta ^{-1} \int _0^t \left( \left\| X_u^{N,(2)} \right\| _{B_2^{15/16}}^2 + \left\| P_N^{(1)} X_u^{N,(2)}\right\| _{L^4}^4 \right) ^{7/8} du + \delta ^{-1} Q.
\end{align*}
\end{lem}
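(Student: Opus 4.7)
The plan is to decompose $\Psi^{(1)}_u(w)$ (with $w = P_N^{(1)} X^{N,(2)}$) into a \emph{time-increment part} $D_u(w)$, in which the difference $w_{s'} - w_u$ appears explicitly, and a \emph{heat-semigroup/paraproduct commutator part} $R_u(w)$, in which the low-frequency factor is frozen at time $u$. By adding and subtracting inside the time integral defining $\Psi^{(1)}_u$, one writes $\Psi^{(1)}_u(w) = D_u(w) + R_u(w)$ with
\[
D_u(w) = \int_0^u e^{(u-s')(\triangle - m_0^2)} (P_N^{(1)})^2 \left[\left\{(w_{s'}-w_u) - \lambda P_N^{(1)}({\mathcal Z}^{(0,3,N)}_{s'} - {\mathcal Z}^{(0,3,N)}_u)\right\} \mbox{\textcircled{\scriptsize$<$}} {\mathcal Z}^{(2,N)}_{s'}\right] ds',
\]
and $R_u(w)$ the standard heat-paraproduct commutator applied to $w_u - \lambda P_N^{(1)} {\mathcal Z}^{(0,3,N)}_u$. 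To land in $B_p^\varepsilon$ after the outer resonance with ${\mathcal Z}^{(2,N)}_u$, I would apply the resonance estimate $\|f \mbox{\textcircled{\scriptsize$=$}} g\|_{B_p^\varepsilon} \lesssim \|f\|_{B_p^{1+\varepsilon+\varepsilon/24}} \|g\|_{B_\infty^{-1-\varepsilon/24}}$, absorbing $\|{\mathcal Z}^{(2,N)}_u\|_{B_\infty^{-1-\varepsilon/24}}$ into $Q$, so that everything reduces to bounding $D_u + R_u$ in $B_p^{1+\varepsilon+\varepsilon/24}$.

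For $D_u(w)$, the key input is the H\"older bound $\|w_u - w_{s'}\|_{L^p} \leq (u-s')^\gamma (s')^{-\eta}\sup_{r\in[0,u)} r^\eta\|w_u-w_r\|_{L^p}/(u-r)^\gamma$, together with the $\gamma$-H\"older seminorm of ${\mathcal Z}^{(0,3,N)}$ in $L^\infty$ (absorbed into $Q$). Combined with heat-kernel smoothing and the paraproduct estimate $\|f \mbox{\textcircled{\scriptsize$<$}} g\|_{B_p^{-1-\varepsilon/24}} \lesssim \|f\|_{L^p}\|g\|_{B_\infty^{-1-\varepsilon/24}}$, one gets $\|D_u(w)\|_{B_p^{1+\varepsilon+\varepsilon/24}} \lesssim Q\cdot u^{\gamma - 13\varepsilon/24 - \eta}\cdot \sup_{r\in[0,u)}\tfrac{r^\eta\|w_u-w_r\|_{L^p}}{(u-r)^\gamma} + Q$, once the beta-integral $\int_0^u (u-s')^{-1-\varepsilon/2-\varepsilon/24+\gamma}(s')^{-\eta}ds'$ (finite by $\gamma > 13\varepsilon/24$ and $\eta < 1$) is computed. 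Multiplying by $(t-u)^{-\alpha/2-\gamma}$ and integrating over $u\in [s,t]$, the weight $u^{\gamma-13\varepsilon/24-\eta}(t-u)^{-\alpha/2-\gamma}$ is integrable (since $\eta < 1$ and $\alpha/2+\gamma<1$), and after pulling out the supremum one obtains the first $\delta$-term on the right-hand side upon rescaling the implicit constant into $\delta$.

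For $R_u(w)$, the standard commutator estimate from Section~2 of \cite{AlKu} shows that the heat-paraproduct commutator gains essentially two derivatives, so $\|R_u(w)\|_{B_p^{1+\varepsilon+\varepsilon/24}} \lesssim Q\cdot \|w_u - \lambda P_N^{(1)} {\mathcal Z}^{(0,3,N)}_u\|_{B_p^{\beta}}$ for some $\beta$ well inside $(0,1)$. The ${\mathcal Z}^{(0,3,N)}_u$-contribution goes into $Q$, and the remaining $\|P_N^{(1)} X^{N,(2)}_u\|_{B_p^\beta}$ is interpolated between $\|X^{N,(2)}_u\|_{B_2^{15/16}}$ (via the Besov embedding $B_2^{15/16}\hookrightarrow B_p^\beta$, using $p\leq 2$) and $\|P_N^{(1)} X^{N,(2)}_u\|_{L^4}$ (via $L^4\hookrightarrow L^p$) with interpolation parameter $\theta = 1/3$, giving $\|P_N^{(1)} X^{N,(2)}_u\|_{B_p^\beta}\lesssim \|X^{N,(2)}_u\|_{B_2^{15/16}}^{1/3}\|P_N^{(1)} X^{N,(2)}_u\|_{L^4}^{2/3}$. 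Young's inequality with exponents $r = 21/8$ and $r' = 21/13$ applied to the product of $(t-u)^{-\alpha/2-\gamma}$ and the interpolated norm then splits as $\delta (\|X^{N,(2)}_u\|_{B_2^{15/16}}^2 + \|P_N^{(1)} X^{N,(2)}_u\|_{L^4}^4)^{7/8} + \delta^{-1}(t-u)^{-(21/13)(\alpha/2+\gamma)}$, where the second singular factor is integrable in $u$ thanks to $\alpha/2+\gamma < 3/8 < 13/21$. Integrating yields the second term and the $\delta^{-1}Q$ on the right-hand side.

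The main technical obstacle is matching exponents on the nose: the interpolation ratio must be exactly $1/3 : 2/3$ so that Young's dual becomes $21/8 : 21/13$, and the time singularities from the heat smoothing $(u-s')^{-1-\varepsilon/2-\varepsilon/24}$, the outer weight $(t-u)^{-\alpha/2-\gamma}$, the H\"older denominator $(u-s')^{-\gamma}$ and the time weight $(s')^{-\eta}$ must all remain jointly integrable. The constraints $2\varepsilon < \gamma$, $\eta > \alpha + 2\gamma$ and $2\alpha + 4\gamma + \varepsilon < 1$ imposed at the beginning of Section~\ref{sec:improve} are exactly what deliver this, so the proof amounts to a careful bookkeeping of these exponents.
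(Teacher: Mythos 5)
Your overall architecture is sound and close in spirit to the paper's: the decomposition of $\Psi^{(1)}_u$ into a time-increment part $D_u$ and a frozen-coefficient heat--paraproduct commutator $R_u$ is essentially what Lemma~4.3 of \cite{AlKu} (which the paper's proof invokes) encapsulates, and your treatment of $R_u$ --- interpolating $\|P_N^{(1)}X^{N,(2)}_u\|_{B_p^{\beta}}$ between $\|X^{N,(2)}_u\|_{B_2^{15/16}}$ and $\|P_N^{(1)}X^{N,(2)}_u\|_{L^4}$ and then applying Young's inequality to produce the $7/8$-power term --- lands on the same second term as the paper.

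The gap is in $D_u$. You bound $\|w_u-w_{s'}\|_{L^p}$ by the full weighted H\"older seminorm, so after the paraproduct, resonance and heat-smoothing estimates the seminorm appears \emph{linearly} with the random prefactor $Q$ (coming from $\sup_t\|{\mathcal Z}^{(2,N)}_t\|_{B_\infty^{-1-\varepsilon/24}}$ and the H\"older seminorm of ${\mathcal Z}^{(0,3,N)}$), and you propose to obtain the first term of the assertion ``upon rescaling the implicit constant into $\delta$''. This is not possible: $Q$ is an unbounded random variable, not a constant, whereas the lemma asserts a deterministic coefficient $\delta$ in front of $\int_s^t\sup_{r\in[0,u)}(\cdots)\,du$, which is what the later absorption argument needs. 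Since the seminorm enters your bound to the first power, no subsequent application of Young's inequality can trade $Q$ for $\delta$. The paper avoids this by first interpolating
$\|w_u-w_{s'}\|_{L^p}\le\|w_u-w_{s'}\|_{L^p}^{\theta}\,\|w_u-w_{s'}\|_{L^p}^{1-\theta}$
with a small $\theta\in(0,1/4)$, bounding the first factor by the weighted seminorm and the second by $L^p$-norms of $w$ itself; this is why the paper's intermediate estimate carries the seminorm only to the power $\theta$, and only then does Young's inequality with exponents $1/\theta$ and $1/(1-\theta)$ produce a genuine $\delta$ in front of the seminorm while pushing all of $Q$ (and the extra time singularities) into the $\delta^{-1}$ terms, which are harmless because $E[Q]\le C$. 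A secondary point: you cannot simply ``pull the supremum out'' of the $u$-integral, since the right-hand side of the lemma keeps $\int_s^t\sup_{r\in[0,u)}(\cdots)\,du$ with the supremum depending on $u$; but this is a matter of presentation, whereas the missing $\theta$-interpolation is a genuine obstruction to reaching the stated conclusion.
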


\begin{proof}
Choose $\theta \in (0,1/4)$.
Note that $\theta$ satisfies $\max\{ \eta , \alpha + \gamma + 2\theta + 3\varepsilon \} <3(1-\theta)/2$.
Applying Lemmas 4.3 and 2.3 in \cite{AlKu} and H\"older's inequality, we have
\begin{align*}
&\int _{s}^{t} (t-u)^{-\alpha /2 -\gamma } \left\| \Psi _u^{(1)} (P_N^{(1)} X^{N,(2)}) \mbox{\rm\textcircled{\scriptsize$=$}} {\mathcal Z}^{(2,N)}_u \right\| _{B_{p}^{\varepsilon}} du\\
&\leq Q \int _{s}^{t} (t-u)^{-\alpha /2 -\gamma } \left( \int _0^u (u-v)^{-21/32} \left\| P_N^{(1)} X_v^{N,(2)}\right\| _{B_{p}^{15/16}} dv \right) du\\
&\quad +Q \int _{s}^{t} (t-u)^{-\alpha /2 -\gamma } \left( \sup _{r\in [0,u)} \frac{r^\eta \left\| P_N^{(1)} X^{N,(2)}_u - P_N^{(1)} X^{N,(2)}_r \right\| _{L^{p}}}{(u-r)^{\gamma }} \right) ^{\theta}\\
&\quad \hspace{1cm} \times \left( \left\| P_N^{(1)} X_u^{N,(2)}\right\| _{L^p}^{1-\theta} + \int _0^u v^{-\eta /2} (u-v)^{(\gamma /2)-1-3\varepsilon /2} \left\| P_N^{(1)} X_v^{N,(2)}\right\| _{L^p}^{1-\theta} dv \right) du +Q\\
&\leq Q \int _0^t (t-u)^{-\alpha /2 -\gamma + 11/32}\left\| P_N^{(1)} X_v^{N,(2)}\right\| _{B_{p}^{15/16}} dv\\
&\quad + \delta \int _{s}^{t} \sup _{r\in [0,u)} \frac{r^\eta \left\| P_N^{(1)} X^{N,(2)}_u - P_N^{(1)} X^{N,(2)}_r \right\| _{L^{p}}}{(u-r)^{\gamma }} du\\
&\quad + \delta ^{-1} \int _{s}^{t} (t-u)^{-(\alpha +2\gamma)/[2(1-\theta)]} \\
&\quad \hspace{1cm} \times \left[ \left\| P_N^{(1)} X_u^{N,(2)}\right\| _{L^p} + \left( \int _0^u v^{-\eta /2} (u-v)^{(\gamma /2)-1-3\varepsilon /2} \left\| P_N^{(1)} X_v^{N,(2)}\right\| _{L^p}^{1-\theta} dv \right) ^{1/(1-\theta)} \right] du \\
&\quad +Q\\
&\leq Q \int _0^t\left\| P_N^{(1)} X_v^{N,(2)}\right\| _{B_{p}^{15/16}}^{7/4} dv + \delta \int _{s}^{t} \sup _{r\in [0,u)} \frac{r^\eta \left\| P_N^{(1)} X^{N,(2)}_u - P_N^{(1)} X^{N,(2)}_r \right\| _{L^{p}}}{(u-r)^{\gamma }} du\\
&\quad + \delta ^{-1} C \left( \int _s^t (t-u)^{-(\alpha +2\gamma )/(1-\theta )} du\right) \left( \int _{s}^{t}\left\| P_N^{(1)} X_u^{N,(2)}\right\| _{L^p}^2 du \right) ^{1/2} +Q \\
&\quad + \delta ^{-1} C \int _{s}^{t} (t-u)^{-(\alpha +2\gamma)/[2(1-\theta)]} u^\theta \\
&\quad \hspace{4cm} \times \left( \int _0^u v^{-\eta /[2(1-\theta )]} (u-v)^{(\gamma -2-3\varepsilon )/[2(1-\theta)]} \left\| P_N^{(1)} X_v^{N,(2)}\right\| _{L^p} dv \right) du .
\end{align*}
Noting that H\"older's inequality and Lemma 2.3 in \cite{AlKu} imply
\begin{align*}
&\int _{s}^{t} (t-u)^{-(\alpha +2\gamma)/[2(1-\theta)]} u^\theta \left( \int _0^u v^{-\eta /[2(1-\theta )]} (u-v)^{(\gamma -2-3\varepsilon )/[2(1-\theta)]} \left\| P_N^{(1)} X_v^{N,(2)}\right\| _{L^p} dv \right) du\\
&\leq C \int _0^t (t-u)^{-(\alpha +2\gamma)/[2(1-\theta)]} \left( \int _0^u v^{-\eta /[2(1-\theta )]} (u-v)^{(\gamma -2-3\varepsilon )/[2(1-\theta)]} \left\| P_N^{(1)} X_v^{N,(2)}\right\| _{L^p} dv \right) du\\
&\leq C \int _0^t v^{-\eta /[2(1-\theta )]} (t-v)^{-(\alpha + \gamma + 2\theta + 3\varepsilon )/[2(1-\theta)]\}} \left\| P_N^{(1)} X_v^{N,(2)}\right\| _{L^p} dv\\
&\leq C \left( \int _0^t v^{-2\eta /[3(1-\theta )]} (t-v)^{-2(\alpha + \gamma + 2\theta + 3\varepsilon )/[3(1-\theta)]} dv\right) ^{3/4} \left( \int _0^t \left\| P_N^{(1)} X_v^{N,(2)}\right\| _{L^p}^4 dv\right) ^{1/4},
\end{align*}
we obtain the assertion in view of the choice of $\theta$ and H\"older's inequality again.
\end{proof}

\begin{lem}\label{lem:Phi3}
For $p\in [1,2]$, $\varepsilon \in (0,1/16)$, $t\in [0,T]$ and $\delta \in (0,1]$,
\begin{align*}
\left\| \Phi _t ^{(3)} (P_N^{(1)} X^{N,(2)}) \right\| _{B_{p}^{-(1+\varepsilon)/2}} &\leq \delta \left\| P_N^{(1)} X^{N,(2)}_t\right\| _{L^4}^4 + \delta ^{-1} Q .
\end{align*}
\end{lem}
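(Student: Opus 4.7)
The plan is to split $\Phi_t^{(3)}(w)$ along its two $\mbox{\textcircled{\scriptsize$>$}}$-summands, estimate each by a single Bony paraproduct bound, and close with Young's inequality. The key inequality I would apply is
\[
\|f \mbox{\textcircled{\scriptsize$>$}} g\|_{B_p^s} \leq C \|f\|_{B_\infty^s} \|g\|_{L^p}, \qquad s\in \mathbb{R},\ p\in [1,\infty],
\]
valid because $\|\Delta_n(f \mbox{\textcircled{\scriptsize$>$}} g)\|_{L^p}\lesssim \sum_{j\sim n}\|S_j g\|_{L^p}\|\Delta_j f\|_{L^\infty}\lesssim \|g\|_{L^p}\,2^{-ns}\|f\|_{B_\infty^s}$ by H\"older and the spectral support of $\mbox{\textcircled{\scriptsize$>$}}$. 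This is of the same type as the paraproduct bounds compiled in Section~2 of \cite{AlKu}.

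For the first summand $-3(\mathcal{Z}_t^{(1,N)}-\lambda P_N^{(1)}\mathcal{Z}^{(0,3,N)}_t)\mbox{\textcircled{\scriptsize$>$}}(P_N^{(1)}X^{N,(2)}_t)^2$ applied with $s=-(1+\varepsilon)/2$, the coefficient is controlled in $B_\infty^{-(1+\varepsilon)/2}$ by $Q$: $\mathcal{Z}_t^{(1,N)}$ directly from \eqref{eq:Zs}, and $P_N^{(1)}\mathcal{Z}^{(0,3,N)}_t$ via the embedding $B_\infty^{1/2-\varepsilon/4}\hookrightarrow B_\infty^{-(1+\varepsilon)/2}$. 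For the squared factor, since $\Lambda$ has finite volume and $2p\leq 4$ whenever $p\in [1,2]$, one has $\|(P_N^{(1)}X^{N,(2)}_t)^2\|_{L^p}=\|P_N^{(1)}X^{N,(2)}_t\|_{L^{2p}}^2\leq C\|P_N^{(1)}X^{N,(2)}_t\|_{L^4}^2$. The first summand is thus bounded by $CQ\|P_N^{(1)}X^{N,(2)}_t\|_{L^4}^2$.

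For the second summand I expand the bracket as $2\mathcal{Z}_t^{(1,N)}P_N^{(1)}\mathcal{Z}^{(0,3,N)}_t-\lambda(P_N^{(1)}\mathcal{Z}^{(0,3,N)}_t)^2$. The renormalized product $\mathcal{Z}_t^{(1,N)}P_N^{(1)}\mathcal{Z}^{(0,3,N)}_t$ appears directly in \eqref{eq:Zs} with values in $B_\infty^{-(1+\varepsilon)/2}$, while $(P_N^{(1)}\mathcal{Z}^{(0,3,N)}_t)^2\in L^\infty\hookrightarrow B_\infty^{-(1+\varepsilon)/2}$ via $B_\infty^{1/2-\varepsilon/4}\hookrightarrow L^\infty$. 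The paraproduct estimate, combined with $\|P_N^{(1)}X^{N,(2)}_t\|_{L^p}\leq C\|P_N^{(1)}X^{N,(2)}_t\|_{L^4}$ for $p\in[1,2]$, then yields a contribution bounded by $CQ\|P_N^{(1)}X^{N,(2)}_t\|_{L^4}$. Summing the two contributions and applying Young's inequality with exponents $(2,2)$ on the quadratic piece and $(4,4/3)$ on the linear piece absorbs all powers of $\|P_N^{(1)}X^{N,(2)}_t\|_{L^4}$ into $\delta\|P_N^{(1)}X^{N,(2)}_t\|_{L^4}^4+\delta^{-1}Q$. I do not expect a serious obstacle; the only subtlety is that the product $\mathcal{Z}_t^{(1,N)}P_N^{(1)}\mathcal{Z}^{(0,3,N)}_t$ must be read directly off \eqref{eq:Zs} as a controlled object rather than reconstructed by multiplying its two factors, whose Besov regularities $-(1+\varepsilon)/2$ and $1/2-\varepsilon/4$ are together too low for a classical multiplication estimate.
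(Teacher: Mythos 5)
Your proposal is correct and takes essentially the same route as the paper's proof, which likewise applies the paraproduct estimate (Proposition 2.1(ii) of \cite{AlKu}) to get the bound $Q\left( \left\| (P_N^{(1)} X_t^{N,(2)})^2 \right\|_{L^p} + \left\| P_N^{(1)} X_t^{N,(2)} \right\|_{L^p} \right)$, passes to the $L^4$ norm on the finite-volume torus, and closes with Young's inequality. Your additional remarks --- in particular reading the renormalized product ${\mathcal Z}_t^{(1,N)} P_N^{(1)} {\mathcal Z}^{(0,3,N)}_t$ directly off \eqref{eq:Zs} rather than reconstructing it from its factors --- simply make explicit what the paper silently absorbs into $Q$ in its first step.
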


\begin{proof}
Estimates of the paraproducts (see Proposition 2.1 (ii) in \cite{AlKu}) imply
\begin{align*}
\left\| \Phi _t ^{(3)} (P_N^{(1)} X^{N,(2)}) \right\| _{B_{p}^{-(1+\varepsilon)/2}} &\leq Q\left( \left\| \left(P_N^{(1)} X_t^{N,(2)} \right) ^2 \right\| _{L^p} + \left\| P_N^{(1)} X_t^{N,(2)}\right\| _{L^p}\right) \\
&\leq Q \left\| P_N^{(1)} X_t^{N,(2)}\right\| _{L^4}^2 + Q \\
&\leq \delta \left\| P_N^{(1)} X_t^{N,(2)} \right\| _{L^4}^4 + \delta ^{-1}Q.
\end{align*}
Thus, we have the inequality.
\end{proof}

\begin{lem}\label{lem:=Z2}
For $\varepsilon \in (0,1/16)$, $p\in [1,2]$, $t\in [0,T]$ and $\delta \in (0,1]$,
\begin{align*}
&\left\| (P_N^{(1)} X^{N,(2),\geqslant}_t) \mbox{\rm\textcircled{\scriptsize$=$}} {\mathcal Z}^{(2,N)}_t \right\| _{B_{p}^{\varepsilon /8}} \\
&\leq \delta \left( \left\| \nabla X^{N,(2),\geqslant}_t \right\| _{L^2}^2 + \left\| P_N^{(1)} X^{N,(2)}_t \right\| _{L^4} ^4 \right) + \delta \left\| X^{N,(2),<}_t \right\| _{L^p}^2 + \delta \left\| X^{N,(2),\geqslant}_t \right\| _{B_{p}^{1+\varepsilon }} + \delta ^{-2}Q.
\end{align*}
\end{lem}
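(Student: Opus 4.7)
The plan is to apply the standard bilinear bound for the resonance paraproduct and then use Besov interpolation together with two rounds of Young's inequality to reduce the estimate to the form of the RHS. First I would invoke (Proposition 2.1(ii) of \cite{AlKu}) the resonance estimate $\|f \mbox{\rm\textcircled{\scriptsize$=$}} g\|_{B_p^{\sigma+\tau}} \leq C\|f\|_{B_p^\sigma}\|g\|_{B_\infty^\tau}$, valid when $\sigma+\tau>0$. Taking $g = {\mathcal Z}^{(2,N)}_t \in B_\infty^{-1-\varepsilon/24}$ (a term bounded by $Q$), $\tau = -1-\varepsilon/24$, and $\sigma = 1+\varepsilon/6$, so that $\sigma+\tau=\varepsilon/8$ exactly matches the target Besov index, together with the $B_p^{1+\varepsilon/6}$-boundedness of $P_N^{(1)}$, yields
\[
\bigl\|(P_N^{(1)} X^{N,(2),\geqslant}_t)\mbox{\rm\textcircled{\scriptsize$=$}}{\mathcal Z}^{(2,N)}_t\bigr\|_{B_p^{\varepsilon/8}}
\leq Q\cdot \|P_N^{(1)} X^{N,(2),\geqslant}_t\|_{B_p^{1+\varepsilon/6}}.
\]

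To separate the $B_p^{1+\varepsilon}$-norm from the lower-order factors, I interpolate between $L^p=B_p^0$ and $B_p^{1+\varepsilon}$: with $\theta = (1+\varepsilon/6)/(1+\varepsilon)$, so that $1-\theta$ is small and comparable to $\varepsilon$,
\[
\|P_N^{(1)} X^{N,(2),\geqslant}_t\|_{B_p^{1+\varepsilon/6}}
\leq C\|P_N^{(1)} X^{N,(2),\geqslant}_t\|_{L^p}^{1-\theta}\|X^{N,(2),\geqslant}_t\|_{B_p^{1+\varepsilon}}^\theta.
\]
A first Young's inequality with exponents $(1/\theta,\,1/(1-\theta))$ applied to $Q$ times this product strips off the $B_p^{1+\varepsilon}$-factor with coefficient $\delta$, leaving a residual of the form $C_\delta\, Q^{1/(1-\theta)}\,\|P_N^{(1)} X^{N,(2),\geqslant}_t\|_{L^p}$. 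A second Young's inequality $ab \leq \alpha a^2 + (4\alpha)^{-1} b^2$ with $\alpha$ of order $\delta$ turns this into $\delta\|P_N^{(1)} X^{N,(2),\geqslant}_t\|_{L^p}^2 + C_\delta Q^{2/(1-\theta)}$. Finally, I decompose $P_N^{(1)} X^{N,(2),\geqslant} = P_N^{(1)} X^{N,(2)} - P_N^{(1)} X^{N,(2),<}$, use the triangle inequality for $L^p$, and bound the first piece via the compact-torus embedding $L^4 \hookrightarrow L^p$ together with an AM-GM bound $\|P_N^{(1)} X^{N,(2)}_t\|_{L^4}^2 \leq \delta\|P_N^{(1)} X^{N,(2)}_t\|_{L^4}^4 + C_\delta$, while the second is dominated by $\|X^{N,(2),<}_t\|_{L^p}^2$ via boundedness of $P_N^{(1)}$ on $L^p$.

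Collecting all contributions and renaming constants produces the asserted inequality; the term $\|\nabla X^{N,(2),\geqslant}_t\|_{L^2}^2$ on the RHS, which is not produced by this argument, is simply included as a free nonnegative additive quantity for uniformity with the other lemmas of the section. The main obstacle is the bookkeeping of Young exponents and powers of $Q$: the exponent $1/(1-\theta) = O(1/\varepsilon)$ controlling the degree of $Q^{1/(1-\theta)}$ is finite only because $\varepsilon>0$, and the two chained Young's inequalities must be tuned simultaneously so that the final coefficients of $\|P_N^{(1)} X^{N,(2)}_t\|_{L^4}^4$, $\|X^{N,(2),<}_t\|_{L^p}^2$, and $\|X^{N,(2),\geqslant}_t\|_{B_p^{1+\varepsilon}}$ are all at most $\delta$ while the $Q$-coefficient stays at most $\delta^{-2}$. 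A secondary, more technical subtlety is that keeping $P_N^{(1)}$ throughout the argument is essential: working with the uncut $X^{N,(2)}$ would require estimating the high-frequency shell $(I-P_N^{(1)})X^{N,(2)} = (P_N^{(2)}-P_N^{(1)})(\tilde X^N-Z) + \lambda(I-P_N^{(1)}){\mathcal Z}^{(0,3,N)}$ in $L^p$, whose first summand is not uniformly bounded in $N$.
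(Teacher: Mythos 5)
Your skeleton (resonance estimate for $\mbox{\textcircled{\scriptsize$=$}}$, interpolation in the Besov scale, Young's inequality, and the decomposition $P_N^{(1)}X^{N,(2),\geqslant}=P_N^{(1)}X^{N,(2)}-P_N^{(1)}X^{N,(2),<}$) is the same as the paper's, but your choice of interpolation endpoints creates a genuine gap: the factor $\delta^{-2}$ in front of $Q$ cannot be obtained your way. Interpolating between $L^p=B_p^0$ and $B_p^{1+\varepsilon}$ at level $1+\varepsilon/6$ forces $\theta=(1+\varepsilon/6)/(1+\varepsilon)$, so $1-\theta\sim\varepsilon$ and the Young exponent needed to peel off the $B_p^{1+\varepsilon}$ factor with coefficient $\delta$ is $\theta/(1-\theta)=O(1/\varepsilon)$ (about $38$ when $\varepsilon$ is near $1/16$). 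Indeed, taking $a=Q=1$ in $Qa^{1-\theta}b^{\theta}$, one has $\sup_b\bigl(b^{\theta}-\delta b\bigr)\sim\delta^{-\theta/(1-\theta)}$, which is not dominated by $\delta a^2+\delta^{-2}Q$ as $\delta\downarrow 0$; no simultaneous tuning of the two Young steps can repair this. The resulting bound $\delta^{-O(1/\varepsilon)}Q$ is not just cosmetically weaker: in Propositions \ref{prop:holder+} and \ref{prop:estsup+} the lemma is invoked with $\delta$ replaced by $(t'-u)^{\beta}$, where $\beta\geq\alpha/2+\gamma$ is forced (the absorbed terms only carry an $L^1_t$ bound, so the kernel multiplying them must stay bounded), and integrability of $(t'-u)^{-\alpha/2-\gamma-2\beta}$ then requires roughly $3(\alpha/2+\gamma)<1$, which holds for $\alpha$ near $1/2$; with exponent $O(1/\varepsilon)$ in place of $2$ one would instead need $\alpha$ tiny, destroying exactly the improvement the paper is after.

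The paper avoids this by interpolating between $B_p^{1-\varepsilon/8}$ and $B_p^{1+\varepsilon}$ at level $1+\varepsilon/4$, so the weights are $2/3$ and $1/3$ independently of $\varepsilon$; Young then costs only $\delta^{-1/2}$ and a second application gives precisely $\delta^{-2}Q$. The price is that one must control $\bigl\|P_N^{(1)}X^{N,(2),\geqslant}_t\bigr\|_{B_p^{1-\varepsilon/8}}^2$, which is done via $B_p^{1-\varepsilon/8}\supset W^{1-\varepsilon/8,p}$ and $\|f\|_{W^{1-\varepsilon/8,p}}\leq C(\|f\|_{L^p}+\|\nabla f\|_{L^p})$ with $p\leq 2$ --- this is exactly where the term $\bigl\|\nabla X^{N,(2),\geqslant}_t\bigr\|_{L^2}^2$ on the right-hand side comes from. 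Your closing remark that this gradient term ``is not produced by this argument'' and is included only ``for uniformity'' is therefore a red flag: in the actual proof it is load-bearing, and its presence in the statement is the hint that the interpolation is meant to stop at a regularity just below $1$ rather than descend to $L^p$. Your final points (splitting off $X^{N,(2),<}$, the $L^4\hookrightarrow L^p$ embedding, and the need to keep $P_N^{(1)}$ throughout) do agree with the paper.
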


\begin{proof}
An estimates of the resonance term (see Proposition 2.1 (iv) in \cite{AlKu}) implies
\begin{equation}\label{eq:lem=Z2-1}
\left\| (P_N^{(1)} X^{N,(2),\geqslant}_t) \mbox{\textcircled{\scriptsize$=$}} {\mathcal Z}^{(2,N)}_t \right\| _{B_{p}^{\varepsilon /8}} \leq C\left\| {\mathcal Z}^{(2,N)}_t \right\| _{B_{\infty}^{-1-\varepsilon /8}} \left\| P_N^{(1)} X^{N,(2),\geqslant}_t \right\| _{B_{p}^{1+\varepsilon /4}} .
\end{equation}
By the interpolation inequality of Besov spaces (see Proposition 2.1 (vii) in \cite{AlKu}) we have
\begin{equation}\label{eq:lem=Z2-2}\begin{array}{rl}
\displaystyle \left\| P_N^{(1)} X^{N,(2),\geqslant}_t \right\| _{B_{p}^{1+\varepsilon /4}}
&\displaystyle \leq \left\| P_N^{(1)} X^{N,(2),\geqslant}_t \right\| _{B_{p}^{1-\varepsilon /8}}^{2/3} \left\| P_N^{(1)} X^{N,(2),\geqslant}_t \right\| _{B_{p}^{1+\varepsilon }}^{1/3} \\
&\displaystyle \leq \delta \left\| P_N^{(1)}X^{N,(2),\geqslant}_t \right\| _{B_{p}^{1-\varepsilon /8}}^2 + C \delta ^{-1/2} \left\| P_N^{(1)} X^{N,(2),\geqslant}_t \right\| _{B_{p}^{1+\varepsilon }}^{1/2} .
\end{array}\end{equation}
In view of
\begin{align*}
&\left\| P_N^{(1)} X^{N,(2),\geqslant}_t \right\| _{B_{p}^{1-\varepsilon /8}}^2
\leq C \left\| P_N^{(1)} X^{N,(2),\geqslant}_t \right\| _{W^{1-\varepsilon /8,p}}^2\\
&\leq C \left(  \left\| P_N^{(1)} X^{N,(2)}_t \right\| _{L^p} + \left\| \nabla P_N^{(1)} X^{N,(2),\geqslant}_t \right\| _{L^p} \right) ^2 + C \left\| P_N^{(1)} X^{N,(2),<}_t \right\| _{L^p}^2,
\end{align*}
from \eqref{eq:lem=Z2-1} and \eqref{eq:lem=Z2-2} we have
\begin{align*}
&\left\| (P_N^{(1)} X^{N,(2),\geqslant}_t) \mbox{\textcircled{\scriptsize$=$}} {\mathcal Z}^{(2,N)}_t \right\| _{B_{p}^{\varepsilon /8}} \\
&\leq \delta Q \left[ \left(  \left\| P_N^{(1)} X^{N,(2)}_t \right\| _{L^p} + \left\| \nabla P_N^{(1)} X^{N,(2),\geqslant}_t \right\| _{L^p} \right) ^2 + \left\| P_N^{(1)} X^{N,(2),<}_t \right\| _{L^p}^2 \right] \\
&\quad + \delta ^{-1/2} Q \left\| P_N^{(1)} X^{N,(2),\geqslant}_t \right\| _{B_{p}^{1+\varepsilon }}^{1/2} + Q, \\
&\leq \delta Q \left(  \left\| P_N^{(1)} X^{N,(2)}_t \right\| _{L^p}^4 + \left\| \nabla P_N^{(1)} X^{N,(2),\geqslant}_t \right\| _{L^p}^2 \right) + \delta Q \left\| P_N^{(1)} X^{N,(2),<}_t \right\| _{L^p}^2 \\
&\quad + \delta \left\| P_N^{(1)} X^{N,(2),\geqslant}_t \right\| _{B_{p}^{1+\varepsilon }} + \delta ^{-2} Q.
\end{align*}
Hence, by replacing $\delta$ and using the uniform boundedness of $P_N^{(1)}$ in $N$ (see Proposition 2.5 in \cite{AlKu}) we obtain the assertion.
\end{proof}

The following proposition is an improved version of Proposition 4.13 in \cite{AlKu}, and actually the regularity of the Besov space is improved by $\alpha$.

\begin{prop}\label{prop:holder+}
For $t\in [0,T]$, 
\begin{align*}
&E\left[ \sup _{s',t'\in [0,t]; s'<t'} \frac{(s')^\eta \left\| X^{N,(2)}_{t'} - X^{N,(2)}_{s'} \right\| _{B_{4/3}^\alpha}}{(t'-s')^{\gamma }} \right] \\
&\leq C E\left[ \sup _{r\in [0,t]} r^\eta \left\| X^{N,(2),\geqslant}_{r} \right\| _{B_{4/3}^{\alpha + 2\gamma}}  \right]  + CE\left[ \sup _{r\in [0,t]} r^\eta \left\| X^{N,(2),<}_r \right\| _{B_{4/3}^{\alpha + 2\gamma}} \right] \\
&\quad + C E\left[ \left\| X_t^{N,(2),<} \right\| _{L^2}^2 \right] + C E\left[ {\mathfrak Y}_{\varepsilon}^N (t) \right] + C\sup _{s\in [0,t]} E\left[ \left\| X_s^{N,(2)} \right\| _{B_1^{-1/2+\varepsilon}} ^q\right] + C.
\end{align*}
\end{prop}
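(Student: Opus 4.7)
My approach follows the strategy of Proposition~4.13 in \cite{AlKu}, but with every norm promoted by $\alpha$ spatial derivatives, exploiting extra smoothing from the heat semigroup. Set $A := \triangle - m_0^2$ and let $F_u$ denote the sum of the right-hand sides of the two equations in \eqref{PDEpara2}, so that $(\partial_t - A) X^{N,(2)} = F$. Duhamel's formula then gives
\[
X^{N,(2)}_{t'} - X^{N,(2)}_{s'} = (e^{(t'-s')A} - I)\, X^{N,(2)}_{s'} + \int_{s'}^{t'} e^{(t'-u)A} F_u\, du,
\]
and the task reduces to estimating each summand in $B^\alpha_{4/3}$.

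For the first summand I use the standard semigroup estimate $\|(e^{\tau A} - I) f\|_{B^\alpha_{4/3}} \le C \tau^\gamma \|f\|_{B^{\alpha+2\gamma}_{4/3}}$ to absorb the denominator $(t'-s')^\gamma$. Splitting $X^{N,(2)} = X^{N,(2),<} + X^{N,(2),\geqslant}$, multiplying by $(s')^\eta$, and taking the supremum over $0 \le s' < t' \le t$ yields exactly the first two terms on the right-hand side of the proposition. Cosmetic though this step looks, it is precisely what forces the regularity on the right-hand side to be $\alpha + 2\gamma$ rather than $\alpha$.

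For the integral term I use the parabolic smoothing $\|e^{\tau A} g\|_{B^\alpha_{4/3}} \le C \tau^{-\alpha/2 - \gamma - \sigma} \|g\|_{B^{\sigma - 2\gamma}_{4/3}}$ (with a small $\sigma \ge 0$ chosen per summand), and then reduce each of the many terms appearing in $F$ to a template already dealt with. The resonance $(P_N^{(1)} X^{N,(2),\geqslant}) \mbox{\textcircled{\scriptsize$=$}} {\mathcal Z}^{(2,N)}$ is handled by Lemma~\ref{lem:=Z2}; the paraproduct-resonance $\Psi^{(1)}(\cdot) \mbox{\textcircled{\scriptsize$=$}} {\mathcal Z}^{(2,N)}$ by Lemma~\ref{lem:Psi1}; and $\Phi^{(3)}$ by Lemma~\ref{lem:Phi3}. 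The remaining pieces, namely $\Phi^{(1)}$, $\Phi^{(2)}$, $\Psi^{(2)}$, the cubic $(P_N^{(1)}X^{N,(2)})^3$, and the paraproduct in the first line of \eqref{PDEpara2}, are treated by the direct $B^\alpha_{4/3}$-analogues of the corresponding \cite{AlKu} estimates. The cubic term naturally feeds into the $\lambda \int \|P_N^{(1)}X^{N,(2)}_u\|_{L^4}^4\, du$ component of ${\mathfrak X}^N_{\lambda,\eta,\gamma}(t)$ (already assumed controlled and therefore absent from the right-hand side), the intermediate-Besov bounds on $X^{N,(2),<}$ in $B^{1-\varepsilon}_4$ and on $X^{N,(2),\geqslant}$ in $B^{1+\varepsilon}_{4/3}$ feed precisely into the two integrals defining ${\mathfrak Y}^N_\varepsilon(t)$, and the low-regularity residues coming from Lemma~\ref{lem:=Z2} and from $\Phi^{(1)}$ produce the $\|X^{N,(2),<}_t\|_{L^2}^2$ and $\sup_s \|X^{N,(2)}_s\|_{B^{-1/2+\varepsilon}_1}^q$ contributions respectively.

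The main obstacle will be the exponent accounting: I must verify that, for each source term, the resulting time integral $\int_{s'}^{t'} (t'-u)^{-\alpha/2 - \gamma - \sigma}\, du$ produces at least the factor $(t'-s')^\gamma$ needed to cancel the denominator of the H\"older quotient. This is exactly where the standing hypotheses $2\varepsilon < \gamma$, $\eta > \alpha + 2\gamma$ and $2\alpha + 4\gamma + \varepsilon < 1$ enter, and it is also what forces the ceiling $\alpha < 1/2$ with the present method. A secondary, more subtle, point is that the $\delta$-terms in Lemmas~\ref{lem:Psi1}--\ref{lem:=Z2} must be absorbed into quantities already controlled a priori (or into the sup on the left before taking expectation) without creating circular dependence; this is managed by choosing $\delta$ sufficiently small and then invoking the moment bound $E[Q]\le C$ from the discussion following \eqref{eq:Zs}.
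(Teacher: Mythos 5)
Your proposal is correct and follows essentially the same route as the paper: Duhamel's formula for the two components of $X^{N,(2)}$, the semigroup difference bound producing $(t'-s')^{\gamma}$ against the $B_{4/3}^{\alpha+2\gamma}$ norms, parabolic smoothing for the integral term, Lemmas \ref{lem:Psi1}--\ref{lem:=Z2} for the three delicate source terms and the corresponding estimates of \cite{AlKu} for the rest, with the same exponent bookkeeping under the standing constraints on $\alpha,\gamma,\varepsilon,\eta$ and the same $\delta$-absorption. No substantive differences from the paper's argument.
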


\begin{proof}
In view of (\ref{PDEpara2}) it follows that
\begin{align*}
&X^{N,(2),<}_t - e^{(t-s)(\triangle - m_0^2)} X^{N,(2),<}_s\\
& = -3 \lambda \int _s^t e^{(t-u)(\triangle - m_0^2)} P_N^{(1)} \left[ \left( P_N^{(1)} X^{N,(2)}_u - \lambda P_N^{(1)} {\mathcal Z}^{(0,3,N)}_u \right) \mbox{\textcircled{\scriptsize$<$}} {\mathcal Z}^{(2,N)}_u \right] du
\end{align*}
for $s,t \in [0,T]$ such that $s<t$.
Hence, for $s',t' \in [0,T]$ such that $s'<t'$, the smoothing property of the heat semigroup and an estimate of the paraproduct (see Proposition 2.1 in \cite{AlKu}) imply
\begin{align*}
&\left\| X^{N,(2),<}_{t'} - X^{N,(2),<}_{s'} \right\| _{B_{4/3}^\alpha} \\
&\leq \left\| e^{(t'-s')(\triangle - m_0^2)} -I \right\| _{B_{4/3}^{\alpha + 2\gamma} \rightarrow B_{4/3}^\alpha} \left\| X^{N,(2),<}_{s'} \right\| _{B_{4/3}^{\alpha + 2\gamma}} \\
&\quad + 3 \lambda \int _{s'}^{t'} \left\| e^{(t'-u)(\triangle - m_0^2)} P_N^{(1)} \left[ \left( P_N^{(1)} X^{N,(2)}_u - \lambda P_N^{(1)} {\mathcal Z}^{(0,3,N)}_u \right) \mbox{\textcircled{\scriptsize$<$}} {\mathcal Z}^{(2,N)}_u \right] \right\| _{B_{4/3}^\alpha}du \\
&\leq C (t'-s')^{\gamma } \left\| X^{N,(2),<}_{s'} \right\| _{B_{4/3}^{\alpha + 2\gamma}} \\
&\quad + C \lambda \int _{s'}^{t'} (t'-u)^{-(\alpha +1)/2-\varepsilon /2} \left\| \left( P_N^{(1)} X^{N,(2)}_u - \lambda P_N^{(1)} {\mathcal Z}^{(0,3,N)}_u \right) \mbox{\textcircled{\scriptsize$<$}} {\mathcal Z}^{(2,N)}_u \right\| _{B_{4/3}^{-1-\varepsilon}}du \\
&\leq C(t'-s')^{\gamma } \left\| X^{N,(2),<}_{s'} \right\| _{B_{4/3}^{\alpha + 2\gamma}} \\
&\quad + \lambda Q (t'-s')^{\gamma } \int _{s'}^{t'} (t'-u)^{-(\alpha +2\gamma +1 + \varepsilon)/2} \left\| P_N^{(1)} X^{N,(2)}_u - \lambda P_N^{(1)} {\mathcal Z}^{(0,3,N)}_u \right\| _{L^{4/3}}du .
\end{align*}
Thus, by applying H\"older's inequality we have for $t \in [0,T]$ and $\delta \in (0,1]$
\begin{equation}\label{eq:propglobal2-3-01}
\begin{array}{l}
\displaystyle \sup _{s',t' \in [0,t]; s'<t'} \frac{(s')^\eta \left\| X^{N,(2),<}_{t'} - X^{N,(2),<}_{s'} \right\| _{B_{4/3}^\alpha}}{(t'-s')^{\gamma }} \\
\displaystyle \leq C \sup _{r\in [0,t]} \left( r^\eta \left\| X^{N,(2),<}_r \right\| _{B_{4/3}^{\alpha + 2\gamma}} \right) + \delta \lambda \int _0^t \left\| P_N^{(1)} X^{N,(2)}_u \right\| _{L^{4/3}}^4 du + C_\delta Q.
\end{array}
\end{equation}
Similarly, from (\ref{PDEpara2}), for $s',t' \in [0,T]$ such that $s'<t'$, we have the estimate
\begin{align*}
&\left\| X^{N,(2),\geqslant}_{t'} - X^{N,(2),\geqslant}_{s'} \right\| _{B_{4/3}^\alpha} \\
&\leq C(t'-s')^{\gamma } \left\| X^{N,(2),\geqslant}_{s'} \right\| _{B_{4/3}^{\alpha + 2\gamma}} + C \lambda (t'-s')^{\gamma } \int _{s'}^{t'} (t'-u)^{-\alpha /2 -\gamma } \left\| P_N^{(1)} X^{N,(2)}_u \right\| _{L^4}^3 du \\
&\quad + C \lambda (t'-s')^{\gamma } \int _{s'}^{t'} (t'-u)^{-\alpha /2 -\gamma } \left\| \Phi _u^{(1)}(P_N^{(1)} X^{N,(2)} ) \right\| _{L^{4/3}} du \\
&\quad + C \lambda (t'-s')^{\gamma } \int _{s'}^{t'} (t'-u)^{-\alpha /2 -\gamma -1/4 -\varepsilon /2} \left\| \Phi _u ^{(2)} (P_N^{(1)} X^{N,(2)}) \right\| _{B_{4/3}^{-1/2-\varepsilon}} du \\
&\quad + C \lambda (t'-s')^{\gamma } \int _{s'}^{t'} (t'-u)^{-\alpha /2 -\gamma -1/4 -\varepsilon /2} \left\| \Phi _u ^{(3)} (P_N^{(1)} X^{N,(2)}) \right\| _{B_{4/3}^{-1/2-\varepsilon}} du \\
&\quad + C \lambda (t'-s')^{\gamma } \int _{s'}^{t'} (t'-u)^{-\alpha /2 -\gamma }  \left\| (P_N^{(1)} X^{N,(2),\geqslant}) \mbox{\textcircled{\scriptsize$=$}} {\mathcal Z}^{(2,N)}_u \right\| _{L^{4/3}} du \\
&\quad + C \lambda (t'-s')^{\gamma } \int _{s'}^{t'} (t'-u)^{-\alpha /2 -\gamma }  \left\| \Psi _u^{(1)} (P_N^{(1)} X^{N,(2)}) \mbox{\textcircled{\scriptsize$=$}} {\mathcal Z}^{(2,N)}_u \right\| _{B_{4/3}^{\varepsilon}} du \\
&\quad + C \lambda (t'-s')^{\gamma } \int _{s'}^{t'} (t'-u)^{-\alpha /2 -\gamma }  \left\| \Psi _u^{(2)} (P_N^{(1)} X^{N,(2)}) \right\| _{B_{4/3}^{\varepsilon}} du .
\end{align*}
For $\delta \in (0,1]$, applying Lemmas 4.4, 4.5 and 4.7 in \cite{AlKu} and Lemmas \ref{lem:Phi3} and \ref{lem:=Z2} with replacing $\delta$ by $(t'-u) ^\beta$ with suitable $\beta$ for each lemmas, and applying Lemma \ref{lem:Psi1} and H\"older's inequality, we have for $\delta \in (0,1]$ $s',t' \in [0,T]$ such that $s'<t'$
\begin{align*}
&\left\| X^{N,(2),\geqslant}_{t'} - X^{N,(2),\geqslant}_{s'} \right\| _{B_{4/3}^\alpha} \\
&\leq C(t'-s')^{\gamma } \left\| X^{N,(2),\geqslant}_{s'} \right\| _{B_{4/3}^{\alpha + 2\gamma}} + C \lambda (t'-s')^{\gamma } \int _{s'}^{t'} \left( \left\| \nabla X_u^{N,(2),\geqslant}\right\| _{L^2}^2 + \left\| P_N^{(1)} X_u^{N,(2)}\right\| _{L^4}^4 \right) du\\
&\quad + C \lambda (t'-s')^{\gamma }\int _0^t \left( \left\| X_u^{N,(2)} \right\| _{B_2^{15/16}}^2 + \left\| P_N^{(1)} X_u^{N,(2)}\right\| _{L^4}^4 \right) ^{7/8} du \\
&\quad + C (t'-s')^{\gamma } \int _{s'}^{t'} \left\| X^{N,(2),<}_u \right\| _{L^{4/3}}^2 du + Q (t'-s') ^{\gamma } \int _{s'}^{t'} \left\| X^{N,(2),\geqslant}_u \right\| _{B_{4/3}^{1+\varepsilon }} du\\
&\quad + \delta Q (t'-s')^{\gamma } \int _{s'}^{t'} \sup _{r\in [0,u)} \frac{r^\eta \left\| P_N^{(1)} X^{N,(2)}_u - P_N^{(1)} X^{N,(2)}_r \right\| _{L^{4/3}}}{(u-r)^{\gamma }} du + C_\delta Q (t'-s')^{\gamma } .
\end{align*}
Here, we remark that applying Lemmas \ref{lem:Phi3} and \ref{lem:=Z2} instead of Lemmas 4.8 and 4.9 in \cite{AlKu} respectively, enables us to improve the regularity of the estimate by $\alpha \in [0,1/2)$.
It is also remarked that Lemma \ref{lem:Psi1} is provided for the clarity of the proof.

From this inequality and (\ref{eq:propglobal2-3-01}) we obtain the conclusion by following the proof of Proposition 4.12 in \cite{AlKu}.
\end{proof}

The following proposition is an improved version of Proposition 4.17 in \cite{AlKu}, and again the regularity of the Besov space is improved by $\alpha$.
We need the version, because the supremum in time of the norms on $B_{4}^{\alpha + 2\gamma }$ and $B_{4/3}^{\alpha + 2\gamma }$ appeared in Proposition \ref{prop:holder+}.

\begin{prop}\label{prop:estsup+}
For $q\in (1, 8/7) $, $t\in [0,T]$ and $\delta \in (0,1]$, we have
\begin{align*}
&E\left[ \sup _{r\in [0,t]} r^\eta \left\| X^{N,(2),<}_{r} \right\| _{B_{4}^{\alpha + 2\gamma }}^3 \right] + E\left[ \sup _{r\in [0,t]} r^\eta \left\| X^{N,(2),\geqslant}_{r} \right\| _{B_{4/3}^{\alpha + 2\gamma }} \right]\\
&\leq C E\left[ \left\| X^{N,(2)}_0 \right\| _{B_{4/3}^{\alpha + 2\gamma - 2\eta }} \right] + C \delta E\left[{\mathfrak X}_{\lambda , \eta ,\gamma}^N (t) \right] + C \delta E\left[ {\mathfrak Y}_{\varepsilon}^N (t) ^q \right] + C_\delta .
\end{align*}
\end{prop}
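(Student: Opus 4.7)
The strategy is to parallel the proof of Proposition 4.17 in \cite{AlKu}, with the three improved Lemmas \ref{lem:Psi1}, \ref{lem:Phi3} and \ref{lem:=Z2} replacing the corresponding ones used there. Start from the mild (Duhamel) form of \eqref{PDEpara2}:
\begin{align*}
X^{N,(2),<}_r &= -3\lambda \int_0^r e^{(r-u)(\triangle - m_0^2)} P_N^{(1)}\!\bigl[(P_N^{(1)} X^{N,(2)}_u - \lambda P_N^{(1)}{\mathcal Z}^{(0,3,N)}_u) \mbox{\textcircled{\scriptsize$<$}} {\mathcal Z}^{(2,N)}_u \bigr] du,\\
X^{N,(2),\geqslant}_r &= e^{r(\triangle - m_0^2)} X^{N,(2)}_0 + \int_0^r e^{(r-u)(\triangle - m_0^2)}(\text{remaining seven terms})\, du,
\end{align*}
and use the heat-semigroup smoothing $\|e^{t(\triangle - m_0^2)}\|_{B_p^{s_1}\to B_p^{s_2}}\lesssim t^{-(s_2-s_1)/2}$ to transfer regularity onto the target spaces $B_4^{\alpha+2\gamma}$ and $B_{4/3}^{\alpha+2\gamma}$. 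The standing conditions $2\alpha+4\gamma+\varepsilon<1$ and $\eta>\alpha+2\gamma$ guarantee that all resulting $(r-u)^{-\beta}$ and $r^{-\eta}$ singularities are integrable at the right places.

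For $X^{N,(2),<}_r$, the paraproduct bound $\|f\mbox{\textcircled{\scriptsize$<$}}{\mathcal Z}^{(2,N)}_u\|_{B_4^{-1-\varepsilon}}\lesssim\|f\|_{L^4}Q$ reduces the Duhamel integral to an $L^4$-integral of $P_N^{(1)} X^{N,(2)}_u$ against $(r-u)^{-(\alpha+2\gamma+1+\varepsilon)/2}$; after cubing, H\"older--Young absorbs this into $\delta\,{\mathfrak X}^N_{\lambda,\eta,\gamma}(t)+C_\delta Q$. For $X^{N,(2),\geqslant}_r$, the key structural observation is that the initial datum contributes
\[
r^\eta\bigl\|e^{r(\triangle - m_0^2)} X^{N,(2)}_0\bigr\|_{B_{4/3}^{\alpha+2\gamma}}\lesssim r^\eta\cdot r^{-\eta}\|X^{N,(2)}_0\|_{B_{4/3}^{\alpha+2\gamma-2\eta}}=\|X^{N,(2)}_0\|_{B_{4/3}^{\alpha+2\gamma-2\eta}},
\]
which matches the first term on the right of the statement. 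The seven nonlinear terms are estimated via Lemmas 4.4, 4.5 and 4.7 of \cite{AlKu} (for the pure cubic and for $\Phi^{(1)},\Phi^{(2)}$) together with the new Lemmas \ref{lem:Phi3}, \ref{lem:=Z2} and \ref{lem:Psi1} (for $\Phi^{(3)}$, for the $\mbox{\textcircled{\scriptsize$=$}}$-resonance with $X^{N,(2),\geqslant}$, and for the $\Psi^{(1)}\mbox{\textcircled{\scriptsize$=$}}{\mathcal Z}^{(2,N)}$-term); as in Proposition \ref{prop:holder+}, these three new lemmas are precisely what permits the Besov regularity to be lifted by $\alpha$. Each term is controlled by H\"older in $u$ followed by Young's inequality with parameter $\delta$, producing contributions of the form $\delta\,{\mathfrak X}^N_{\lambda,\eta,\gamma}(t)$, $\delta\,{\mathfrak Y}^N_\varepsilon(t)^q$ or $C_\delta Q$. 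Finally, taking $\sup_{r\in[0,t]}r^\eta\cdot$ (with the first bound raised to the third power), then expectation, and invoking $E[Q]\le C$ closes the estimate.

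The main obstacle is the exponent bookkeeping at every single application of H\"older and Young, since several $(r-u)^{-\beta}$ singularities with $\beta$ close to $1$ must all remain integrable after multiplication by the weight $r^\eta$ and after Young-splitting, and the resulting right-hand side must really close into linear multiples of ${\mathfrak X}^N_{\lambda,\eta,\gamma}(t)$ and ${\mathfrak Y}^N_\varepsilon(t)^q$. The restriction $q\in(1,8/7)$ arises precisely in the terms that feed back into ${\mathfrak Y}^N_\varepsilon$: Lemma \ref{lem:Psi1} carries a factor with exponent $7/8$, so when the corresponding Duhamel contribution is raised to the $q$-th power and compared against ${\mathfrak Y}^N_\varepsilon(t)^q$, one is forced to have $7q/8<1$. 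Verifying that this threshold is simultaneously compatible with every H\"older split and with the standing ranges of $\alpha,\gamma,\varepsilon,\eta$ is the delicate technical point.
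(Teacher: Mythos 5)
Your proposal follows essentially the same route as the paper's proof: the $<$-component is handled exactly as in Lemma 4.14(i) of \cite{AlKu} (mild form, paraproduct bound, cubing, H\"older), and the $\geqslant$-component via the Duhamel formula with the initial datum absorbed by the $(t-s)^{-\eta}$ smoothing into $\|X^{N,(2)}_0\|_{B_{4/3}^{\alpha+2\gamma-2\eta}}$ and the seven nonlinear terms estimated by Lemmas 4.4, 4.5 and 4.7 of \cite{AlKu} together with Lemmas \ref{lem:Phi3}, \ref{lem:=Z2} and \ref{lem:Psi1}, with $\delta$ replaced by $\delta(t-u)^\beta$ before H\"older and Young. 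The only quibble is that your attribution of the constraint $q<8/7$ to the $7/8$-power term of Lemma \ref{lem:Psi1} is speculative --- that term is naturally absorbed into ${\mathfrak X}^N_{\lambda,\eta,\gamma}$ via Young's inequality rather than into ${\mathfrak Y}^N_{\varepsilon}$ --- but this does not affect the structure of the argument.
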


\begin{proof}
By Lemma 4.14(i) in \cite{AlKu} we have
\begin{align*}
&E\left[ \sup _{r\in [0,t]} r^\eta \left\| X^{N,(2),<}_{r} \right\| _{B_{4}^{\alpha + 2\gamma }}^3 \right] \\
&\leq \lambda E\left[ Q \sup _{r\in [0,t]} \left( \int _0^r (r-u)^{-(1+\alpha)/2 -\gamma -\varepsilon /4} \left\| P_N^{(1)} X^{N,(2)}_u - \lambda P_N^{(1)} {\mathcal Z}^{(0,3,N)}_u \right\| _{L^{4}} du \right) ^3\right] .
\end{align*}
Hence, by applying H\"older's inequality we have for $\delta \in (0,1]$
\begin{equation}\label{eq:propestsup01}
E\left[ \sup _{r\in [0,t]} r^\eta \left\| X^{N,(2),<}_{r} \right\| _{B_{4}^{\alpha + 2\gamma }}^3 \right] \leq \delta \lambda E\left[ \int _0^t  \left\| P_N^{(1)} X^{N,(2)}_u\right\| _{L^4}^4 du \right] + C_\delta.
\end{equation}
Similarly to the proof of Lemma 4,14(ii) in \cite{AlKu} we have for $s,t\in [0,T]$ such that $s<t$
\begin{align*}
\left\| X_t^{N,(2),\geqslant} \right\| _{B_{4/3}^{\alpha + 2\gamma}}
&\leq C (t-s)^{-\eta} \left\| X_s^{N,(2),\geqslant} \right\| _{B_{4/3}^{\alpha + 2\gamma -2\eta}} \\
&\quad + C\lambda \int _s^t (t-u)^{-(\alpha +2\gamma)/2} \left\| P_N^{(1)} X_u^{N,(2)} \right\| _{L^4}^3 du \\
&\quad + C\lambda \int _s^t (t-u)^{-(\alpha +2\gamma)/2} \left\| \Phi _u^{(1)}(P_N^{(1)} X^{N,(2)}) \right\| _{L^{4/3}} du \\
&\quad + C\lambda \int _s^t (t-u)^{-(2\alpha +4\gamma +1+2\varepsilon)/4} \left\| \Phi _u^{(2)}(P_N^{(1)} X^{N,(2)})\right\| _{B_{4/3}^{-1/2-\varepsilon}} du \\
&\quad + C\lambda \int _s^t (t-u)^{-(2\alpha +4\gamma +1+2\varepsilon)/4} \left\| \Phi _u^{(3)}(P_N^{(1)} X^{N,(2)})\right\| _{B_{4/3}^{-1/2-\varepsilon}} du \\
&\quad + C\lambda \int _s^t (t-u)^{-(\alpha +2\gamma)/2} \left\| ( P_N^{(1)} X_u^{N,(2),\geqslant}) \mbox{\textcircled{\scriptsize$=$}} {\mathcal Z}_u^{2,N} \right\| _{L^{4/3}} du \\
&\quad + C\lambda \int _s^t (t-u)^{-(\alpha +2\gamma -\varepsilon )/2} \left\|\Psi _u^{(1)}(P_N^{(1)} X^{N,(2)}) \mbox{\textcircled{\scriptsize$=$}} {\mathcal Z}_u^{2,N} \right\| _{B_{4/3}^{\varepsilon}} du \\
&\quad + C\lambda \int _s^t (t-u)^{-(\alpha +2\gamma -\varepsilon )/2} \left\|\Psi _u^{(2)}(P_N^{(1)} X^{N,(2)}) \right\| _{B_{4/3}^{\varepsilon}} du .
\end{align*}
Similarly to the proof of Proposition \ref{prop:holder+}, for $\delta \in (0,1]$, applying Lemmas 4.4, 4.5 and 4.7 in \cite{AlKu} and Lemmas \ref{lem:Phi3} and \ref{lem:=Z2} with replacing $\delta$ by $\delta (t-u) ^\beta$ with suitable $\beta$ for each lemmas, and applying Lemma 3.1 and H\"older's inequality, we have
\begin{align*}
&E\left[ \sup _{r\in [0,t]} r^\eta \left\| X^{N,(2),\geqslant}_{r} \right\| _{B_{4/3}^{\alpha + 2\gamma }} \right] \\
&\leq C E\left[ \left\| X^{N,(2)}_0 \right\| _{B_{4/3}^{\alpha + 2\gamma  - 2\eta }} \right] + \delta E\left[{\mathfrak X}_{\lambda ,\eta ,\gamma}^N (t) \right] + \delta E\left[ {\mathfrak Y}_{\varepsilon}^N (t) ^q \right] + C_\delta.
\end{align*}
Here, we use the assumptions of the parameters $\alpha$, $\gamma$ and $\varepsilon$.
Therefore, by this inequality and (\ref{eq:propestsup01}) we have the assertion.
\end{proof}

Now we obtain the following uniform estimate in $N$.

\begin{thm}\label{thm:tight1}
Let $\alpha \in [0,1/2)$ and choose $\varepsilon \in (0,1/16]$, $\gamma \in (0,1/8)$ and $\eta \in (1/2,1)$ such that $2\varepsilon < \gamma$, $\eta > \alpha + 2\gamma$ and $2\alpha + 4\gamma +\varepsilon < 1$, and let $q\in (1, 8/7)$.
Then, we have
\begin{align*}
&E\left[ \sup _{s,t\in [0,T]; s<t} \frac{s^\eta \left\| X^{N,(2)}_{t} - X^{N,(2)}_{s} \right\| _{B_{4/3}^\alpha}}{(t-s)^{\gamma }} \right] + E\left[{\mathfrak X}_{\lambda , \eta ,\gamma}^N (T) \right] + E\left[ {\mathfrak Y}_{\varepsilon}^N (T) ^q \right] \\
&+ E\left[ \sup _{r\in [0,T]} r^\eta \left\| X^{N,(2),<}_{r} \right\| _{B_{4}^{\alpha + 2\gamma }}^3 \right] + E\left[ \sup _{r\in [0,T]} r^\eta \left\| X^{N,(2),\geqslant}_{r} \right\| _{B_{4/3}^{\alpha + 2\gamma }} \right]\\
& \leq C.
\end{align*}
\end{thm}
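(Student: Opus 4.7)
The plan is to assemble the two improved Propositions \ref{prop:holder+} and \ref{prop:estsup+} on top of the a priori energy bound proved in Section 4 of \cite{AlKu}. First I would take as input from \cite{AlKu} the uniform bound
\[
E\bigl[\mathfrak{X}_{\lambda,\eta,\gamma}^N(T)\bigr] + E\bigl[\mathfrak{Y}_\varepsilon^N(T)^q\bigr] \leq C,
\]
which is established there via the $L^2$-energy identity for $X^{N,(2)}_t$ (producing the $H^1$, $L^2$ and $L^4$ integrals in $\mathfrak{X}$) combined with the Schauder bootstrap that converts the $L^4$ control into the $B_4^{1-\varepsilon}$ and $B_{4/3}^{1+\varepsilon}$ time-integrals in $\mathfrak{Y}$. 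These bounds are untouched by the present improvements and directly handle the middle two terms on the left-hand side of the theorem.

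Next I would bound the initial value $X_0^{N,(2)} = P_N^{(2)}(\xi_N - \zeta) + \lambda\,\mathcal{Z}_0^{(0,3,N)}$. By the stationarity of $(\tilde X^N, Z)$, $\xi_N\sim\mu_N$ and $\zeta\sim\mu_0$ have uniform-in-$N$ first moments in the \emph{negative}-regularity space $B_{4/3}^{\alpha + 2\gamma - 2\eta}$ (the index is strictly negative by the standing assumption $\eta > \alpha + 2\gamma$), and $\mathcal{Z}_0^{(0,3,N)}$ is one of the quantities listed in \eqref{eq:Zs}, hence has all moments bounded uniformly in $N$. Plugging these inputs into Proposition \ref{prop:estsup+} with, say, $\delta=1$ yields at once the last two terms of the theorem:
\[
E\Bigl[\sup_{r\in[0,T]} r^\eta \|X_r^{N,(2),<}\|_{B_4^{\alpha + 2\gamma}}^3\Bigr] + E\Bigl[\sup_{r\in[0,T]} r^\eta \|X_r^{N,(2),\geqslant}\|_{B_{4/3}^{\alpha + 2\gamma}}\Bigr] \leq C.
\]

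Finally I would invoke Proposition \ref{prop:holder+} for the H\"older-in-time term. Each quantity on its right-hand side is now controlled: the two sup-in-time Besov norms are bounded by the previous display; $E[\mathfrak{Y}_\varepsilon^N(t)]$ is bounded by $(E[\mathfrak{Y}_\varepsilon^N(T)^q])^{1/q}$ via Jensen; $E[\|X_t^{N,(2),<}\|_{L^2}^2]$ follows by writing $X^{N,(2),<} = X^{N,(2)} - X^{N,(2),\geqslant}$ and using the $\mathfrak{X}$ bound for the first summand together with the Besov embedding $B_{4/3}^{\alpha+2\gamma} \hookrightarrow L^2$ applied to the second; and $\sup_{s\in[0,t]} E[\|X_s^{N,(2)}\|_{B_1^{-1/2+\varepsilon}}^q]$ collapses by stationarity to a single-time moment, which is bounded by the same Besov-moment estimates for $\mu_N$, $\mu_0$ and the elements of \eqref{eq:Zs}. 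Combined with the already-bounded middle terms, this yields the full estimate.

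The only genuinely substantive work has already been done upstream: Propositions \ref{prop:holder+} and \ref{prop:estsup+} provide the regularity gain of $\alpha$, and the $E[\mathfrak{X}]+E[\mathfrak{Y}^q]$ bound is imported from \cite{AlKu}. The main potential obstacle is cosmetic---making sure the parameter constraints $\eta > \alpha + 2\gamma$ and $2\alpha + 4\gamma + \varepsilon < 1$ are exactly what is needed to (i) render the initial-data Besov index negative and (ii) keep all the time-integral exponents appearing through Propositions \ref{prop:holder+} and \ref{prop:estsup+} integrable; once this is verified, the theorem follows by straightforward assembly.
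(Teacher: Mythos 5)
Your proposal is in substance the paper's own proof, which is a one-line deferral: rerun the proof of Theorem 4.18 of \cite{AlKu} with Propositions \ref{prop:holder+} and \ref{prop:estsup+} substituted for Propositions 4.13 and 4.17. Your slightly decoupled organization --- import $E[{\mathfrak X}_{\lambda,\eta,\gamma}^N(T)]+E[{\mathfrak Y}_\varepsilon^N(T)^q]\le C$ from the $\alpha=0$ case (these quantities do not depend on $\alpha$), then feed the two new propositions --- is a legitimate rearrangement of the same scheme. Two sub-steps are mis-justified as written, though both conclusions are fine. First, you bound $E[\|X_t^{N,(2),<}\|_{L^2}^2]$ by splitting $X^{N,(2),<}=X^{N,(2)}-X^{N,(2),\geqslant}$ and invoking (a) the ${\mathfrak X}$ bound, which only controls $\int_0^t\|X_s^{N,(2)}\|_{L^2}^2\,ds$ and not the endpoint value, and (b) the embedding $B_{4/3}^{\alpha+2\gamma}\hookrightarrow L^2$, which is false on the three-dimensional torus: it would require $\alpha+2\gamma\ge 3/4$, while the standing constraint $2\alpha+4\gamma+\varepsilon<1$ forces $\alpha+2\gamma<1/2$. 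The repair is immediate from ingredients you already use: Proposition \ref{prop:estsup+} controls $E[\sup_r r^\eta\|X_r^{N,(2),<}\|_{B_4^{\alpha+2\gamma}}^3]$, and $B_4^{\alpha+2\gamma}\hookrightarrow L^4\hookrightarrow L^2$ together with $\|\cdot\|^2\le 1+\|\cdot\|^3$ yields the fixed-time second moment. Second, for the initial datum it is not enough that $\alpha+2\gamma-2\eta$ be negative, since $\xi_N$ only has uniform moments at regularity below $-1/2$; what is actually needed, and what $\eta>\max\{1/2,\alpha+2\gamma\}$ does guarantee, is $\alpha+2\gamma-2\eta<-1/2$. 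With these two points corrected, your assembly goes through and coincides with the paper's argument.
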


\begin{proof}
By following the proof of Theorem 4.18 in \cite{AlKu} with applying Propositions \ref{prop:holder+} and \ref{prop:estsup+} instead of Propositions 4.13 and 4.17 in \cite{AlKu}, we obtain the assertion.
\end{proof}

Theorem \ref{thm:tight1} improves the regularity of Besov norms in Theorem 4.18 in \cite{AlKu} by $\alpha$.
By using the improvement we are able to show the tightness of the laws of $\{ X^N\}$ in the spaces smaller than that in Theorem 4.19 in \cite{AlKu} as follows.

\begin{thm}\label{thm:tight2}
For $\tilde \varepsilon \in (0, 1/16]$, the laws of $\{ X^N\}$ are tight on $C([0,\infty ); {B_{12/5}^{-1/2- \tilde \varepsilon }})$.
Moreover, if $X$ is a limit in law of a subsequence $\{ X^{N(k)}\}$ of $\{ X^N\}$ on $C([0,\infty ); {B_{12/5}^{-1/2-\tilde \varepsilon }})$, then $X$ is a continuous process on $B_{12/5}^{-1/2-\tilde \varepsilon }$, the limit measure $\mu$ of the associated subsequence $\{ \mu _{N(k)}\}$ is a stationary measure with respect to $X$ and it holds that
\begin{equation}\label{eq:thmtight2}
\int \| \phi \| _{B_\infty ^{-1/2-\tilde \varepsilon}}^2 \mu (d\phi ) < \infty .
\end{equation}
\end{thm}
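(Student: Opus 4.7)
My plan is to follow the argument for Theorem 4.19 in \cite{AlKu}, swapping in Theorem \ref{thm:tight1} in place of their Theorem 4.18 and using Besov embeddings on the three-dimensional torus to upgrade the spatial integrability index of the state space from $4/3$ to $12/5$.

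Given $\tilde\varepsilon \in (0, 1/16]$, I would first choose parameters $\alpha, \gamma, \varepsilon, \eta$ admissible for Theorem \ref{thm:tight1} satisfying the additional target conditions $\alpha \in [1/2-\tilde\varepsilon, 1/2)$ and $\alpha + 2\gamma \in (1/2-\tilde\varepsilon, 1/2)$. Such choices exist for every $\tilde\varepsilon>0$: the constraint $2\alpha+4\gamma+\varepsilon<1$ is precisely what forces $\alpha+2\gamma<1/2$, but leaves room up to that bound, so e.g.\ $\alpha = 1/2-\tilde\varepsilon/2$, $\gamma=\tilde\varepsilon/8$, $\varepsilon=\tilde\varepsilon/20$ works. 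On $\Lambda$ one has $B_4^{s}\hookrightarrow B_{12/5}^{s}$ (from $L^4\hookrightarrow L^{12/5}$) and $B_{4/3}^{s}\hookrightarrow B_{12/5}^{s-1}$ (the Sobolev embedding based on the index $s-3/p$); applied to the bounds in Theorem \ref{thm:tight1} these give, uniformly in $N$ and for every $0<t_0<T<\infty$,
\[
E\Bigl[\sup_{r\in[t_0,T]}\|X^{N,(2)}_r\|_{B_{12/5}^{\alpha+2\gamma-1}}\Bigr] + E\Bigl[\sup_{t_0\leq s<t\leq T}\frac{\|X^{N,(2)}_t - X^{N,(2)}_s\|_{B_{12/5}^{\alpha-1}}}{(t-s)^\gamma}\Bigr]\leq C\,t_0^{-\eta},
\]
with $\alpha+2\gamma-1>-1/2-\tilde\varepsilon$ and $\alpha-1\geq -1/2-\tilde\varepsilon$ by construction.

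Next, writing $X^N_t = X^{N,(2)}_t + P_N^{(2)} Z_t - \lambda\,\mathcal{Z}^{(0,3,N)}_t$, the two Gaussian-type terms are uniformly bounded in H\"older-in-time norms with values in $B_\infty^{-1/2-\tilde\varepsilon}\hookrightarrow B_{12/5}^{-1/2-\tilde\varepsilon}$ by the chaos/hypercontractivity estimates already encoded in \eqref{eq:Zs}. Combined with the display above and a standard Kolmogorov--Arzel\`a-type tightness criterion — compact embedding of $B_{12/5}^{\alpha+2\gamma-1}$ into $B_{12/5}^{-1/2-\tilde\varepsilon}$ together with a uniform time-H\"older bound — this yields tightness of $\{X^N\}$ in $C([t_0,T]; B_{12/5}^{-1/2-\tilde\varepsilon})$ for every $0<t_0<T<\infty$. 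I would then use stationarity of $\tilde X^N$, hence of $X^N$, to shift off the time-$0$ singularity $t_0^{-\eta}$, treating the initial datum $X^N_0$ directly via the uniform-in-$N$ integrability of $(P_N^{(2)})_* \mu_N$ in $B_\infty^{-1/2-\tilde\varepsilon}$ already available in \cite{AlKu}; this promotes the tightness to $C([0,\infty); B_{12/5}^{-1/2-\tilde\varepsilon})$. Continuity of any limit $X$ is then automatic, stationarity of $X$ passes to the limit through the weak convergence from the stationarity of each $X^{N(k)}$, and since $(P_{N(k)}^{(2)})_* \mu_{N(k)} \Rightarrow \mu$, the measure $\mu$ is the stationary distribution of $X$. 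For \eqref{eq:thmtight2}, the uniform bound $\sup_N E[\|X^N_0\|_{B_\infty^{-1/2-\tilde\varepsilon}}^2] \leq C$ from \cite{AlKu} (this is not what is improved in the present paper) combined with Fatou applied to the countable dyadic supremum defining $\|\cdot\|_{B_\infty^{-1/2-\tilde\varepsilon}}$ and the weak convergence gives $\int \|\phi\|_{B_\infty^{-1/2-\tilde\varepsilon}}^2\,\mu(d\phi)\leq C$.

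The main obstacle I expect is precisely the parameter bookkeeping in the first step: one must simultaneously respect $2\varepsilon<\gamma$, $\eta\in(1/2,1)$ with $\eta>\alpha+2\gamma$, $2\alpha+4\gamma+\varepsilon<1$ coming from Theorem \ref{thm:tight1}, together with the additional target inequality $\alpha+2\gamma>1/2-\tilde\varepsilon$ needed to land strictly above the Besov index $-1/2-\tilde\varepsilon$ so that the compact embedding argument applies. Once these parameters are fixed and the embeddings are inserted, the rest of the argument is a direct transcription of \cite[Thm 4.19]{AlKu}, with the Gaussian-term and stationarity ingredients unchanged.
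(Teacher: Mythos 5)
Your tightness argument is essentially the paper's: the same parameter choice ($\alpha$ close to $1/2$ so that $\alpha+\tilde\varepsilon>1/2$), the same inputs from Theorem \ref{thm:tight1} (the weighted time-H\"older bound in $B_{4/3}^{\alpha}$ and the weighted sup-in-time bounds in $B_{4}^{\alpha+2\gamma}$ and $B_{4/3}^{\alpha+2\gamma}$), compactness from the regularity gap $2\gamma$, and the Besov embedding $B_{4/3}^{\alpha}\subset B_{12/5}^{\alpha-1}\subset B_{12/5}^{-1/2-\tilde\varepsilon}$; whether one applies the embedding before or after establishing tightness in the $p=4/3$ scale is immaterial, and your parameter bookkeeping checks out. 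That part is fine.

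The gap is in your proof of \eqref{eq:thmtight2}. You invoke a uniform bound $\sup_N E\bigl[\|X_0^N\|^2_{B_\infty^{-1/2-\tilde\varepsilon}}\bigr]\le C$ as being available ``from \cite{AlKu}'' and assert that it ``is not what is improved in the present paper.'' That is exactly backwards: the uniform second-moment bound of the approximating marginals in $B_\infty^{-1/2-\tilde\varepsilon}$ is precisely the improvement of the support of the $\Phi^4_3$-measure announced in the abstract and introduction; \cite{AlKu} only provides such a bound in a larger (less integrable) Besov space, so as written your argument for \eqref{eq:thmtight2} assumes the conclusion. The derivation you are missing runs as follows: by stationarity, $E\bigl[\|X_0^{N}\|^2_{B_\infty^{-1/2-\tilde\varepsilon}}\bigr]=T^{-1}\int_0^T E\bigl[\|X_t^{N}\|^2_{B_\infty^{-1/2-\tilde\varepsilon}}\bigr]\,dt$; after removing the Gaussian terms this reduces to $\int_0^T E\bigl[\|X_t^{N,(2)}\|^2_{B_\infty^{-1/2-\tilde\varepsilon}}\bigr]\,dt$; the Besov embedding $B_2^{1-\tilde\varepsilon}\subset B_\infty^{-1/2-\tilde\varepsilon}$ converts this to $\int_0^T E\bigl[\|X_t^{N,(2)}\|^2_{B_2^{1-\tilde\varepsilon}}\bigr]\,dt$, which is controlled by $E\bigl[{\mathfrak X}_{\lambda,\eta,\gamma}^N(T)\bigr]+E\bigl[{\mathfrak Y}_{\varepsilon}^N(T)\bigr]$ and hence bounded uniformly in $N$ by Theorem \ref{thm:tight1}. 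The trade of the time-zero marginal for a time average is essential here: the sup-in-time estimates of Theorem \ref{thm:tight1} live only in $L^p$-based spaces with $p\le 4$ and low regularity, while the time-integrated energy functionals ${\mathfrak X}$ and ${\mathfrak Y}$ supply the $H^1$-type regularity needed to reach $B_\infty^{-1/2-\tilde\varepsilon}$ by embedding. Your Fatou/lower-semicontinuity step for passing the bound to the limit $\mu$ is fine once this uniform bound is actually established.
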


\begin{proof}
We follow the proof of Theorem 4.19 in \cite{AlKu}.
Choose $\alpha \in [0,1/2)$ sufficiently close to $1/2$ so that $\alpha + \tilde \varepsilon >1/2$, choose $\gamma \in (0,1/8)$ and $\varepsilon \in (0,1/16)$ sufficiently small, and choose $\eta \in (1/2,1)$ sufficiently large so that the assumptions in Theorem \ref{thm:tight1} and $\varepsilon < \tilde \varepsilon$ hold.
Let $T\in (0,\infty )$ and $t_0 \in (0,T)$.
For $h\in (0,1]$ and $\varepsilon' \in (0,1]$, Chebyshev's inequality implies that
\begin{align*}
&\sup _{N\in {\mathbb N}} P \left( \sup _{s,t\in [t_0,T]; |s-t|<h} \left\| X^{N,(2)}_t - X^{N,(2)}_s \right\| _{B_{4/3}^\alpha} > \varepsilon' \right) \\
&\leq \frac{h^\gamma }{\varepsilon' t_0^\eta } E\left[ \sup _{s,t\in [t_0,T]; s<t, t-s<h} \frac{s^\eta \left\| X^{N,(2)}_{t} - X^{N,(2)}_{s} \right\| _{B_{4/3}^\alpha}}{(t-s)^{\gamma }} \right] 
\end{align*}
Hence, from Theorem \ref{thm:tight1} we obtain
\begin{equation}\label{eq:thmtight2-1}
\lim _{h\downarrow 0} \sup _{N\in {\mathbb N}} P \left( \sup _{s,t\in [t_0,T]; |s-t|<h} \left\| X^{N,(2)}_t - X^{N,(2)}_s \right\| _{B_{4/3}^\alpha} > \varepsilon' \right) =0
\end{equation}
for $\varepsilon' \in (0,1]$.
On the other hand, Chebyshev's inequality implies that, for any $R>0$,
\[
\sup _{N\in {\mathbb N}} P \left( \left\| X^{N,(2)}_{t_0} \right\|  _{B_{4/3}^{\alpha + 2\gamma }} >R \right) 
\leq \frac{1}{R t_0^\eta} \sup _{N\in {\mathbb N}} E\left[ \sup _{r\in [0,T]} r^\eta \left\| X^{N,(2)}_{r} \right\| _{B_{4/3}^{\alpha + 2\gamma }}\right].
\]
Hence, by Theorem \ref{thm:tight1} we obtain
\begin{equation}\label{eq:thmtight2-2}
\lim _{R\rightarrow \infty} \sup _{N\in {\mathbb N}} P \left( \left\| X^{N,(2)}_{t_0} \right\|  _{B_{4/3}^{\alpha + 2\gamma }} >R \right) =0 .
\end{equation}
In view of the fact that the unit ball in $B_{4/3}^{\alpha + 2\gamma }$ is compactly embedded in $B_{4/3}^{\alpha}$ (see Theorem 2.94 in \cite{BCD}), the tightness of the laws of $\{ X^{N,(2)}\}$ on $C([t_0,T]; {B_{4/3}^{\alpha}})$ follows from (\ref{eq:thmtight2-1}) and (\ref{eq:thmtight2-2}).
By the Besov embedding theorem (see Proposition 2.1 in \cite{AlKu}) we have $B_{4/3}^{\alpha} \subset B_{12/5}^{-1/2-\tilde \varepsilon}$.
Hence, we have the tightness of the laws of $\{ X^{N,(2)}\}$ on $C([t_0,T]; B_{12/5}^{-1/2-\tilde \varepsilon})$.
The rest of the proofs are completely same as that of Theorem 4.19 in \cite{AlKu} except (\ref{eq:thmtight2}).

Now we prove (\ref{eq:thmtight2}).
The stationarity of $X^N$ implies
\begin{align*}
\int \| \phi \| _{B_\infty ^{-1/2-\tilde \varepsilon}}^2 \mu (d\phi ) &\leq \liminf _{k\rightarrow \infty} \int \| \phi \| _{B_\infty ^{-1/2-\tilde \varepsilon}}^2 \mu _{N(k)}(d\phi ) \\
&= \liminf _{k\rightarrow \infty} E\left[ \left\| X^{N(k)}_0 \right\| _{B_\infty ^{-1/2-\tilde \varepsilon}}^2 \right] \\
&= \frac{1}{T} \liminf _{k\rightarrow \infty} \int _0^T E\left[ \left\| X^{N(k)}_t \right\| _{B_\infty ^{-1/2-\tilde \varepsilon}}^2 \right] dt \\
&\leq C \liminf _{k\rightarrow \infty} \int _0^T E\left[ \left\| X^{N(k), (2)}_t \right\| _{B_\infty ^{-1/2-\tilde \varepsilon}}^2 \right] dt + C.
\end{align*}
Since the Besov embedding theorem implies
\[
\left\| X^{N(k), (2)}_t \right\| _{B_\infty ^{-1/2-\tilde \varepsilon}}^2 \leq C \left\| X^{N(k), (2)}_t \right\| _{B_2^{1-\tilde \varepsilon}}^2,
\]
we have
\begin{align*}
\int \| \phi \| _{B_\infty ^{-1/2-\varepsilon}}^2 \mu (d\phi ) 
&\leq C \liminf _{N\rightarrow \infty} \int _0^T E\left[ \left\| X^{N, (2)}_t \right\| _{B_2^{1-\tilde \varepsilon}}^2 \right] dt + C\\
&\leq C \liminf _{N\rightarrow \infty} E\left[{\mathfrak X}_{\lambda , \eta ,\gamma}^N (T) \right] + C \liminf _{N\rightarrow \infty} E\left[ {\mathfrak Y}_{\varepsilon}^N (T) \right] +C.
\end{align*}
Therefore, we obtain (\ref{eq:thmtight2}) from Theorem \ref{thm:tight1}.
\end{proof}

\vspace{5mm}
\noindent
{\bf Acknowledgements.}
The author thanks the anonymous referees for helpful comments. The comments improved the quality of the present paper.
This work was partially supported by JSPS KAKENHI Grant Numbers 17K14204 and 21H00988.


\def\cprime{$'$} \def\cprime{$'$} \def\cprime{$'$}

\end{document}